\documentclass[12pt]{amsart}
\usepackage{graphicx,textcase}
\usepackage[headings]{fullpage}
\usepackage{amssymb,epic,eepic,epsfig,amsbsy,amsmath,amscd,hyperref}
\numberwithin{equation}{section}
                        \textwidth16cm
                        \textheight23cm
                        \topmargin-1cm
                        \oddsidemargin 0.2cm
                        \evensidemargin 0.2cm
                        \theoremstyle{plain}
\usepackage{mathrsfs}

\newcommand\no[1]{}

\newtheorem{theorem}{Theorem}[section]
\newtheorem{thm}{Theorem}
\newtheorem{lemma}[theorem]{Lemma}

\newtheorem{proposition}[theorem]{Proposition}

\theoremstyle{definition}
\newtheorem{remark}[theorem]{Remark}

\newcommand{\lcr}{\raisebox{-5pt}{\mbox{}\hspace{1pt}
                  \epsfig{file=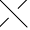}\hspace{1pt}\mbox{}}}

\def\BC{\mathbb C}

\def\BH{\mathbb H}

\def\BZ{\mathbb Z}
\def\BR{\mathbb R}
\def\BS{\mathbb S}

\def\CI{\mathcal I}

\def\fb{\mathfrak b}

\def\la{\langle}
\def\ra{\rangle}

\DeclareMathOperator{\tr}{\mathrm tr}

\def\ve{\varepsilon}
\def\be { \begin{equation} }
\def\ee { \end{equation} }

\begin{document}

\title[Volumes of two-bridge knot cone-manifolds]
{Exact integral formulas for volumes of two-bridge knot cone-manifolds}

\author[Anh T. Tran and Nisha Yadav]{Anh T. Tran and Nisha Yadav}
\address{Department of Mathematical Sciences, The University of Texas at Dallas, Richardson, TX 75080, USA}
\email{att140830@utdallas.edu}
\address{Physics Imaging Department, The University of Texas at MD Anderson Cancer Center, Houston, TX 77030, USA}
\email{fnisha@mdanderson.edu}

\thanks{2020 \textit{Mathematics Subject Classification}.\/ Primary 57K10, 57K32.}
\thanks{{\it Key words and phrases.\/}
cone-manifold, double twist knot, nonabelian representation, Riley polynomial, two-bridge knot, volume.}

\begin{abstract}
We provide exact integral formulas for hyperbolic and spherical volumes of cone-manifolds whose underlying space is the $3$-sphere and whose singular set belongs to three infinite families of two-bridge knots: $C(2n,2)$ (twist knots), $C(2n,3)$, and $C(2n,-2n)$ for any non-zero integer $n$. Our formulas express volumes as integrals of explicit rational functions involving Chebyshev polynomials of the second kind, with integration limits determined by roots of algebraic equations. This extends previous work where only implicit formulas requiring numerical approximation were known. 
\end{abstract}

\maketitle

\section{Introduction}

An $n$-dimensional cone-manifold  is a simplicial complex  $M$ which can be triangulated so that the link of each simplex is piecewise-linear homeomorphic to a standard $(n-1)$-sphere and $M$ is equipped with a complete path metric such that the restriction of the metric to each simplex is isometric to a geodesic simplex of constant curvature $\kappa$. The cone-manifold is hyperbolic, Euclidean,
or spherical if  $\kappa$ is $-1$, $0$, or $+1$ respectively. 

The singular locus $\Sigma$ of a cone-manifold $M$ consists of the points in $M$ with no neighborhood isometric to a ball in a Riemannian manifold. Then $\Sigma$ is a union of totally geodesic closed simplices of dimension $n-2$. At each point of $\Sigma$ in an open $(n-2)$-simplex, there is a cone angle which is the sum of dihedral angles of $n$-simplices containing the point. In general, the cone angle may vary from point to point within a simplex. The regular set $M \setminus \Sigma$  is a dense open subset of $M$ and has a smooth Riemannian metric of constant curvature $\kappa$, but this metric is incomplete if $\Sigma \not= \emptyset$. 

In this paper, we will only consider $3$-dimensional cone-manifolds whose underlying space $M$ is the $3$-sphere $\BS^3$ and whose singular set is a knot $K$ with constant cone angle $\alpha \in [0,2\pi)$. We will denote these cone manifolds by $K(\alpha)$. 

A two-bridge knot, also known as a rational knot, is a knot that admits a projection with two maxima and two minima. In the Conway notation, a two-bridge knot corresponds to a continued fraction
\[
	[a_1, a_2, \dots, a_k] =
	a_1 + \dfrac{1}{a_2 + \dfrac{1}{\ddots + \dfrac{1}{a_k}}}
\]
and denoted by $C(a_1, a_2, \dots, a_k)$. Its diagram is shown in Figure \ref{fig:genKnotDiagram}. In the $a_i$ box, $|a_i|$ denotes the number of signed half-twists and the sign of each half-twist is the same as the sign of $a_i \in \BZ$. Here, we use the convention that  the sign of the right-handed half-twist $\lcr$ in the $a_i$ box is positive for odd $i$ and negative for even $i$. In the Schubert notation, $C(a_1, a_2, \dots, a_k)$ is the two-bridge knot $\fb(p,q)$ where $\frac{p}{q} = [a_1, a_2, \dots, a_k]$. 

\begin{figure}[h]
		\centering
		\includegraphics[scale=1]{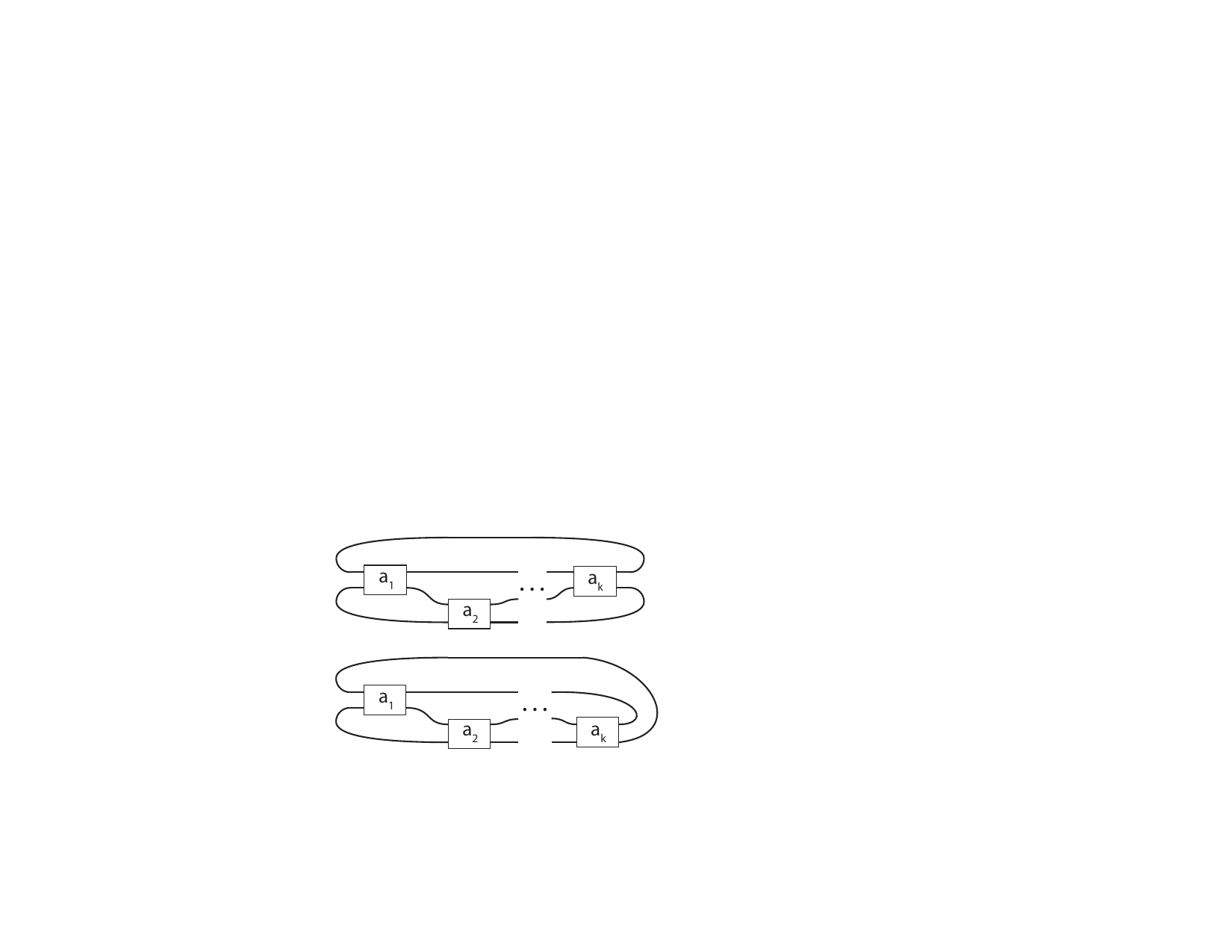}
		\caption{The rational knot $C(a_1,a_2, \ldots,a_k)$. The upper/lower one corresponds to odd/even $k$.}
		\label{fig:genKnotDiagram}
\end{figure}

\subsection*{Geometric transitions of cone-manifolds}
For any non-toric two-bridge knot $K$, Kojima \cite{Ko} and Porti \cite{Po} established that there exists a critical angle $\alpha_K \in [\frac{2\pi}{3}, \pi)$ such that the cone manifold $K(\alpha)$ undergoes geometric transitions:
\begin{itemize}
\item hyperbolic structure for $\alpha \in [0, \alpha_K)$,
\item Euclidean structure for $\alpha = \alpha_K$,
\item spherical structure for $\alpha \in (\alpha_K, 2\pi - \alpha_K)$.
\end{itemize}

\subsection*{Previous work on volume formulas}
Hilden, Lozano, and Montesinos-Amilibia \cite{HLM} introduced a method for calculating volumes of two-bridge knot cone manifolds, but without providing explicit formulas. Integral formulas for hyperbolic volumes have been obtained for specific families:
\begin{itemize}
\item $C(2n, 2)$ (twist knots) by Ham, Mednykh, and Petrov \cite{HMP},
\item $C(2n,3)$ by Ham and Lee \cite{HL},
\item $C(2n,k)$ (double twist knots $J(-2n, k)$) by Tran \cite{Tr}.
\end{itemize}
However, these formulas involve implicitly defined integrands and are primarily useful for numerical approximation. The only exact integral formulas previously known were those given by Mednykh \cite{Me} for two-bridge knots with up to seven crossings.

Note that the toric $C(2n, k)$ knots are $C(\pm 2, \mp2)$ (the trefoil knots) and $C(2n, \pm 1)$ (torus knots of type $(2,2n \pm 1)$). These knots do not admit hyperbolic structures. Their spherical volumes are computed by Mednykh \cite{Me} and Kolpakov--Mednykh \cite{KM}. 

\subsection*{Our contribution}
In this paper, we extend Mednykh's approach to obtain exact integral formulas for infinite families of two-bridge knots. We will give exact integral formulas for hyperbolic and spherical  volumes of cone-manifolds along non-toric two-bridge knots $C(2n, 2)$, $C(2n, 3)$ and $C(2n, -2n)$ where $n$ is a non-zero integer. 

To state our main result, we introduce the Chebychev polynomials of the second kind $S_k(z)$. They are recursively defined by $S_0(z)=1$, $S_1(z)=z$ and $S_{k}(z) = z S_{k-1}(z) - S_{k-2}(z)$ for all integers $k$. 

For non-toric two-bridge knots $K=C(2n,3)$, $C(2n, 2)$ or $C(2n, -2n)$ we let 
\begin{eqnarray*}
f_n(y) &=& \frac{yS_{n-1}(y) - 2S_n(y)}{(y-2)S_{n-1}(y)}, \\
g_n(y) &=& \begin{cases}
  - \displaystyle{\frac{(S_n(y) - S_{n-1}(y))^2}{(y-2)^3S^4_{n-1}(y)}}  & \quad \text{if } K=C(2n,3), \\
  - \displaystyle{\frac{S_n(y) - S_{n-1}(y)}{(y-2)^2S^3_{n-1}(y)}}  & \quad \text{if } K=C(2n,2) \text{~and~} n \not= -1, \\  
  \displaystyle{\frac{1}{(y-2)^2S^4_{n-1}(y)}} & \quad \text{if } K=C(2n, -2n) \text{~and~} |n| \ge 2.
\end{cases}
\end{eqnarray*}
The hyperbolic and spherical volumes of the cone-manifold $K(\alpha)$ are given as follows. 

\begin{thm} \label{main-h}
Let $K(\alpha)$, with $0 \le \alpha < \alpha_K$, be a hyperbolic cone-manifold. Then 
$$
\mathrm{Vol}(K(\alpha)) = i \int_{\overline{y_0}}^{y_0} \log \left( \frac{f^2_n(y)+ A^2}{(1+A^2)g_n(y)} \right) \frac{f'_n(y)}{f^2_n(y)-1} dy , $$
where $y_0$, with $\mathrm{Im} (f_n(y_0)) > 0$,  is a root of $f^2_n(y)+ A^2= (1+A^2)g_n(y)$ and $A=\cot \frac{\alpha}{2}$. The integration is taken along a simple path from $\bar{y}_0$ to $y_0$, not passing through singular points of the integrand.
\end{thm}

\begin{thm} \label{main-s}
Let $K(\alpha)$, with $\alpha_K < \alpha <  2\pi - \alpha_K$, be a spherical cone-manifold. Then, for $\alpha \in (\alpha_K, \pi]$ we have 
$$
\mathrm{Vol}(K(\alpha)) = \int_{y_+}^{y_-}  \log \left( \frac{f^2_n(y)+ A^2}{(1+A^2)g_n(y)} \right) \frac{f'_n(y)}{f^2_n(y)-1} dy,
$$
where $y_{\pm}$, with $f_n(y_{\pm}) \in \BR$,  are  roots of $f^2_n(y)+ A^2= (1+A^2)g_n(y)$ and $A=\cot \frac{\alpha}{2}$. The integration is taken along a simple path from $y_+$ to $y_-$, not passing through singular points of the integrand. 

For $\alpha \in (\pi, 2\pi - \alpha_K)$ we have 
\begin{equation} \label{>pi}
\mathrm{Vol}(K(\alpha)) = 2  \pi (\alpha - \pi) + \mathrm{Vol}(K(2\pi - \alpha)).
\end{equation} 
\end{thm}

We remark  that the equality \eqref{>pi} actually holds true for all two-bridge knots $K$. Moreover, $\mathrm{Vol}(K(\pi)) = \pi^2/p$ if $K= \fb(p,q)$ in the Schubert notation (see Proposition \ref{pi}). 

As in \cite{Me}, the proofs of Theorems \ref{main-h} and \ref{main-s} are based on  
\begin{itemize}
\item trigonometric identity between the cone angle $\alpha$ and the complex length $\gamma_\alpha$ of the singular geodesic $K$ in the cone-manifold $K(\alpha)$, and 
\item the Schl\"{a}fli formula  
$$
\kappa \, d \mathrm{Vol}(K(\alpha))  =  \frac{1}{2} l_\alpha d\alpha,
$$
where $l_\alpha = \mathrm{Re} \, \gamma_\alpha > 0$ is the real length of $K$.
\end{itemize}
This approach, based on the Schl\"afli formula, was first applied to compute volumes of hyperbolic and spherical polyhedra by Kellerhals \cite{Ke} and Vinberg \cite {Vi1, Vi2}.

The paper is organized as follows. In Section \ref{knot} we briefly review holonomy representations of hyperbolic and spherical knot cone-manifolds. In Section \ref{odd} we first study $\mathrm{SL}_2(\BC)$-representations of $C(2n, 2p+1)$, then prove trigonometric identity between the cone angle and  the complex length of the singular geodesic, and finally give a proof of Theorems \ref{main-h} and \ref{main-s}  for $C(2n,3)$. In Section \ref{even} we carry out the same things for $C(2n,2)$ and $C(2n, -2n)$. Finally, in Section \ref{ex} we present some specific examples. 

\section{Knot cone-manifolds} \label{knot}

Recall that $K(\alpha)$ denotes the $3$-dimensional cone manifolds whose underlying space $M$ is the $3$-sphere $S^3$ and whose singular set is a knot $K$ with constant cone angle $\alpha \in (0,2\pi]$. Let $G(K) := \pi_1(S^3 \setminus K)$ be the knot group, which is the fundamental group of the knot exterior. Choose the canonical
meridian-longitude pair $(\mu, \lambda)$ in $G(K)$ such
that $\mu$ is an oriented boundary of meridian disk of $K$ and $\lambda$ is null-homologous outside  $K$. 

If $K(\alpha)$ is hyperbolic, then let $\rho_\alpha: G(K) \to \mathrm{Isom}^+(\BH^3) \cong \mathrm{PSL}_2(\BC)$ be the holonomy representation. Then $\rho_\alpha$ admits two liftings to $\mathrm{SL}_2(\BC)$. 
Up to conjugation in $\mathrm{SL}_2(\BC)$, we can assume that
$$
\rho_\alpha (\mu) = \pm \left[
\begin{array}{cc}
e^{i\alpha/2} & 0 \\
0 & e^{-i\alpha/2}
\end{array}
\right],  \quad \rho_\alpha(\lambda) = \left[
\begin{array}{cc}
e^{\gamma_\alpha/2} & 0\\
0 & e^{-\gamma_\alpha/2}
\end{array}
\right]
$$
where $\gamma_\alpha = l_\alpha + i \varphi_\alpha$, $l_\alpha$ is the length of $K$, and $\varphi_\alpha \in [-2\pi, 2\pi)$ is the angle of the lifted holonomy of $K$. We call $\gamma_\alpha = l_\alpha + i \varphi_\alpha$ the complex length of the singular geodesic $K$. 

If $K(\alpha)$ is spherical, then let $\rho_\alpha: G(K) \to \mathrm{Isom}^+(\BS^3) \cong\mathrm{SO}(4)$ be the holonomy representation. Then $\rho_\alpha$ admits two liftings to $\mathrm{SU}(2) \times \mathrm{SU}(2)$. 
Up to conjugation in $\mathrm{SU}(2) \times \mathrm{SU}(2)$, we can assume that
\begin{eqnarray*}
\rho_\alpha(\mu) &=& \left( \pm \left[
\begin{array}{cc}
e^{i\alpha/2} & 0 \\
0 & e^{-i\alpha/2}
\end{array}
\right], \pm \left[
\begin{array}{cc}
e^{i\alpha/2} & 0 \\
0 & e^{-i\alpha/2}
\end{array}
\right] \right),  \\
\rho_\alpha(\lambda) &=& \left( \left[
\begin{array}{cc}
e^{i\gamma} & 0 \\
0 & e^{-i\gamma}
\end{array}
\right],  \left[
\begin{array}{cc}
e^{i\phi} & 0 \\
0 & e^{-i\phi}
\end{array}
\right]\right).
\end{eqnarray*}
In this case $l_\alpha = \gamma - \phi$ is the length of the knot $K$, and $\varphi_\alpha = \gamma + \phi \in [-2\pi, 2\pi)$ is the angle of the lifted holonomy of $K$. Note that $\gamma = \frac{1}{2}(\varphi_\alpha + l_\alpha)$ and $\phi = \frac{1}{2}(\varphi_\alpha - l_\alpha)$. 

For any non-toric  two-bridge knot $K$, by \cite{Ko, Po} there exists an angle $\alpha_K \in [\frac{2\pi}{3}, \pi)$ such that the cone manifold $K(\alpha)$ is hyperbolic if $\alpha \in [0, \alpha_K)$, Euclidean if $\alpha = \alpha_K$ and spherical if $\alpha \in (\alpha_K, 2\pi - \alpha_K)$. 

\begin{proposition} \label{pi}
Let $K=\fb(p,q)$ be a  two-bridge knot in the Schubert notation. Then
\[
  \mathrm{Vol}(K(\pi)) = \frac{\pi^2}{p}.
\]
Moreover,  for $\alpha \in (\pi, 2\pi - \alpha_K)$ we have 
$$
\mathrm{Vol}(K(\alpha)) = 2  \pi (\alpha - \pi) + \mathrm{Vol}(K(2\pi - \alpha)).
$$
\end{proposition}

\begin{proof}
The two-fold branched cover of $S^3$ branched along $K=\fb(p,q)$ with cone angle $\pi$ is
the lens space $L(p,q)$, which admits a $p$-fold cover by $S^3$. Since
$\mathrm{Vol}(S^3)=2\pi^2$, we get $\mathrm{Vol}(L(p,q))=2\pi^2/p$, and hence
$\mathrm{Vol}(K(\pi))=\frac{1}{2}\mathrm{Vol}(L(p,q))=\pi^2/p$.

For $\alpha \in (\pi, 2\pi - \alpha_K)$ we use the following arguments from \cite{MR, Po, Me}. First of all, we have $l_\alpha = 4\pi - l_{2\pi -\alpha}$. This formula was first proved by Mednykh and Rasskazov \cite{MR} for the figure eight knot, and then by Porti \cite{Po} for all two-bridge knots. Note that in \cite{MR} the notation $l_\alpha$ is used for half of the length of $K(\alpha)$; whereas in \cite{Po}  it is used for the entire length.

By the Schl\"{a}fli formula we get $\frac{d}{d \alpha} \mathrm{Vol}(K(\alpha)) = \frac{1}{2} l_\alpha$. Then 
$$
\frac{d}{d \alpha} \mathrm{Vol}(K(2\pi - \alpha)) = - \frac{1}{2} l_{2\pi -\alpha} = -2\pi + \frac{1}{2} l_\alpha.
$$
This implies that the derivative of the function $F(\alpha) := \mathrm{Vol}(K(\alpha)) - \mathrm{Vol}(K(2\pi - \alpha))$ with respect to $\alpha$ is equal to $2\pi$. Since $F(\pi) = 0$ we obtain 
$$ 
\mathrm{Vol}(K(\alpha)) - \mathrm{Vol}(K(2\pi - \alpha)) = 2\pi (\alpha - \pi)
$$
for all $\alpha$. Note that the last relation is exactly the statement of Lemma 4.2 in \cite{Po}.
\end{proof}

\section{$C(2n, 2p+1)$}
\label{odd}

\subsection{Knot group}

\begin{proposition} \label{group-odd}
We have $G(C(2n, 2p+1)) = \la a, b \mid \omega a = b \omega\ra$ where 
$$\omega = (ab)^n [(a^{-1}b^{-1})^{n}(ab)^n]^p.$$
\end{proposition}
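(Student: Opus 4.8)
The plan is to derive the presentation from the classical one‑relator presentation of a two‑bridge knot group in Schubert normal form, specialized to the continued fraction $[2n,2p+1]$. Recall that for the two‑bridge knot $\mathfrak{b}(\alpha,\beta)$ (with $\alpha$ odd, $0<\beta<\alpha$, $\gcd(\alpha,\beta)=1$) the Wirtinger presentation of the standard plat diagram collapses to $\langle a,b\mid \Omega a=b\Omega\rangle$, where $a,b$ are meridians and $\Omega=s_1s_2\cdots s_{\alpha-1}$ with $s_i=a^{\epsilon_i}$ for odd $i$, $s_i=b^{\epsilon_i}$ for even $i$, and $\epsilon_i=(-1)^{\lfloor i\beta/\alpha\rfloor}$. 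Granting this, all that remains is to identify $\alpha,\beta$ for $C(2n,2p+1)$ and to evaluate the sign sequence $(\epsilon_i)$ in closed form.

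First I would compute $\frac{\alpha}{\beta}=[2n,2p+1]=2n+\frac{1}{2p+1}=\frac{2n(2p+1)+1}{2p+1}$, so that $C(2n,2p+1)=\mathfrak{b}(\alpha,\beta)$ with $\alpha=2n(2p+1)+1$, $\beta=2p+1$; here $\beta$ is odd, $\gcd(\alpha,\beta)=1$, and $0<\beta<\alpha$ when $n\ge 1$ (for $n<0$ or $2p+1<0$ one gets the mirror image, which has the same knot group, and for $p=0$ one gets the torus knot $T(2,2n+1)$, for which the recipe below still produces $\omega=(ab)^n$).

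Next I would evaluate the floors. Writing $N=2n$, $Q=2p+1$, so $\alpha=NQ+1$ and $\beta=Q$, one has $\lfloor iQ/(NQ+1)\rfloor\ge k$ exactly when $i\ge kN+k/Q$; since $0\le k/Q<1$ for $0\le k\le Q-1$, the floor equals $k$ precisely for $i$ in the block $\{kN+1,\dots,(k+1)N\}$, and these $Q$ blocks of length $N$ exhaust $i=1,\dots,NQ=\alpha-1$. So $\epsilon_i=(-1)^k$ throughout the $k$-th block, and the block values alternate $+1,-1,+1,\dots,+1$ across the $2p+1$ blocks. Since $2n$ is even and the $k$-th block begins at the odd position $2kn+1$, inside that block the letters of $\Omega$ read $a,b,a,b,\dots$, each raised to $(-1)^k$, so the block equals $(a^{(-1)^k}b^{(-1)^k})^n$; multiplying over $k=0,1,\dots,2p$ gives $\Omega=(ab)^n[(a^{-1}b^{-1})^n(ab)^n]^p=\omega$, and the relation $\Omega a=b\Omega$ becomes $\omega a=b\omega$.

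Everything after the first step is elementary arithmetic, so the one place that will need care is the exact normalization of the classical two‑bridge presentation: which bridge meridian is $a$ and which is $b$, whether the quoted relator is $\Omega a=b\Omega$ or one of its standard equivalents (inverse, mirror, or the variant with $\Omega$ starting in $b$), and the exact index convention in $\epsilon_i=(-1)^{\lfloor i\beta/\alpha\rfloor}$ --- so that the answer is literally the word $\omega$ rather than a conjugate or cyclic rotation of it. If one prefers not to cite the classical presentation, this amounts to re‑deriving it from the Wirtinger presentation of the plat diagram of $C(2n,2p+1)$, which is the only genuinely nontrivial (though routine) calculation involved; in that picture each pass of a strand through the horizontal $2n$‑crossing twist region contributes a factor $(ab)^n$ or $(a^{-1}b^{-1})^n$, while the odd $(2p+1)$‑crossing vertical twist region makes the strand double back, so that it runs through the horizontal region $2p+1$ times in total, with the letters inverted on alternate passes because the strand orientation reverses at each turn‑back. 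As sanity checks, $p=0$ returns the $(2,2n+1)$‑torus knot group and $n=p=1$ gives $\omega=aba^{-1}b^{-1}ab$, a standard relator for $5_2=\mathfrak{b}(7,3)=C(2,3)$.
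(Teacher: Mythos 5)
Your route is genuinely different from the paper's: the paper proves this proposition by a direct Wirtinger computation on the twist-region diagram (inductive formulas for the meridians $a_k,b_k,c_k,d_k$ along the two twist regions, then the identification $a_0=c_{p+1}$), whereas you invoke the classical Schubert--Riley normal-form presentation $\langle a,b\mid \Omega a=b\Omega\rangle$, $\epsilon_i=(-1)^{\lfloor i\beta/\alpha\rfloor}$, and reduce the proposition to arithmetic of the sign sequence. For $n\ge 1$ and $p\ge 0$ your argument is correct: the identification $C(2n,2p+1)=\mathfrak{b}(2n(2p+1)+1,\,2p+1)$ agrees with the paper's stated Schubert convention ($\mathfrak{b}(p,q)$ with $p/q$ equal to the continued fraction), the block analysis of $\lfloor iQ/(NQ+1)\rfloor$ is right, and the product of the blocks is literally $\omega$; the normalization issues you flag (which relator form, $\beta$ versus $\beta^{-1}$) are real but standard and resolvable.

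There is, however, a genuine gap at negative parameters, and the paper needs them: the main theorems allow every non-zero integer $n$, so the proposition must hold, with the specific word $\omega=(ab)^n[(a^{-1}b^{-1})^n(ab)^n]^p$, also when $n<0$. Your disposal of this case --- ``the mirror image has the same knot group'' --- only gives an abstract isomorphism of groups; it does not produce a presentation with that particular word, which is what the later computations (the Riley polynomial $\Phi$, the entry $W_{12}$, the longitude) actually use. Moreover the negative case cannot come out of your floor computation at all: for $n<0$, $p\ge 0$ one has $\alpha=|2n(2p+1)+1|=2|n|(2p+1)-1$, so the classical word has length $\alpha-1=2|n|(2p+1)-2$, while $\omega$ has length $2|n|(2p+1)$; e.g.\ for $C(-2,3)$ (the figure-eight knot) $\omega=b^{-1}a^{-1}bab^{-1}a^{-1}$ has six letters but the normal-form word has four. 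So for negative parameters the stated presentation is not the Schubert normal form, and identifying the two requires additional Tietze-type manipulation using the relation, which your proposal does not supply. The paper's inductive diagrammatic computation is uniform in the signs of the twist parameters and avoids this; to complete your approach you would either run that computation for $n<0$ as well, or add the explicit reduction of $\omega a=b\omega$ to the normal-form relation in the negative case.
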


\begin{figure}[h]
		\centering
		\includegraphics[scale=1]{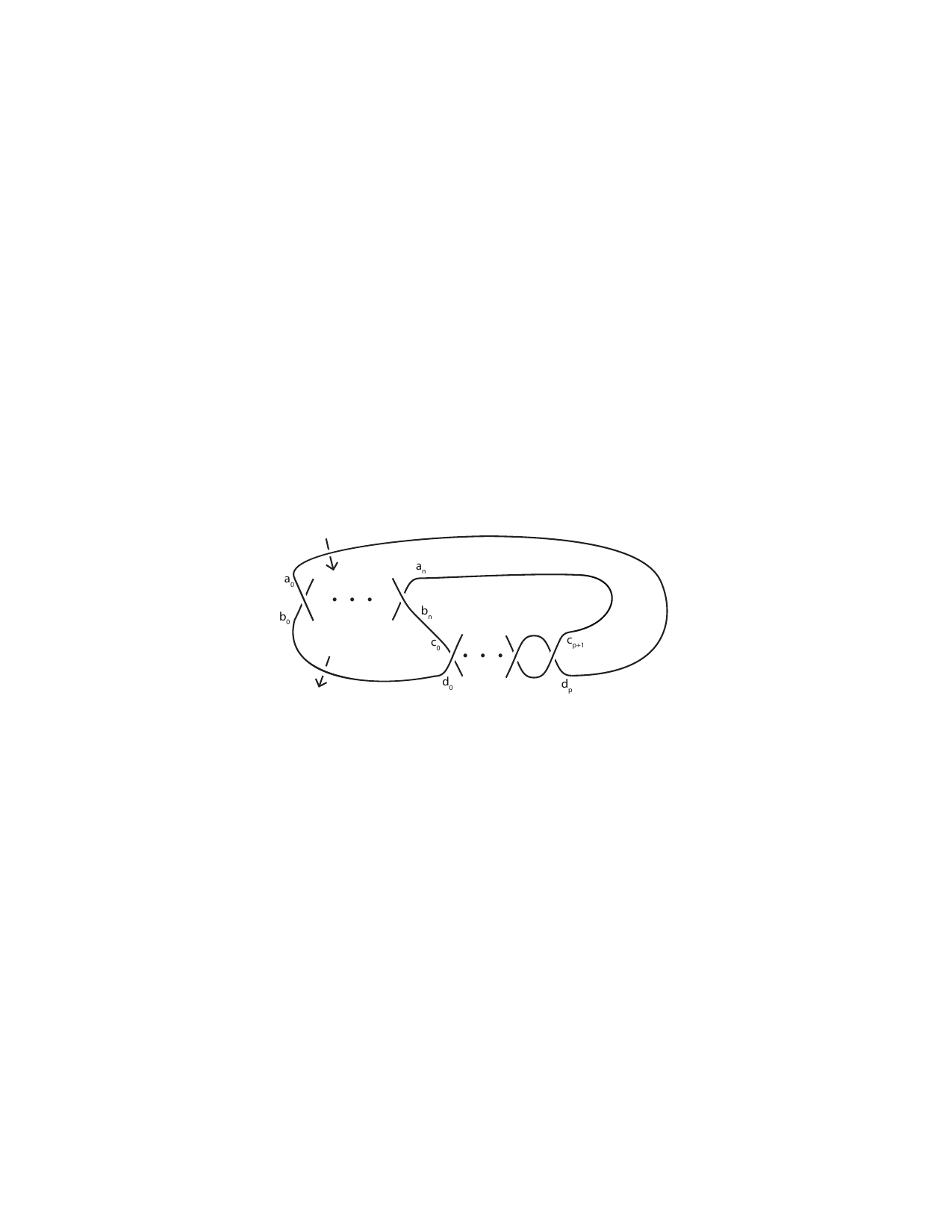}
		\caption{$C(2n,2p+1)$}
\end{figure}

\begin{proof}
Starting from the left hand section of $2n$ crossings, by induction we have 
	\begin{align*}
		a_k &= (b_0 a_0)^{-k} a_0 (b_0 a_0)^{k}, \\
		b_k &= (b_0 a_0)^{-k} b_0 (b_0 a_0)^{k}.
	\end{align*}
Similarly, in the right hand section of $l$ crossings we have
	\begin{align*}
		c_k &= (c_0^{-1}d_0)^{-k} c_0 (c_0^{-1}d_0)^{k}, \\
		d_k &= (c_0^{-1}d_0)^{-k} d_0 (c_0^{-1}d_0)^{k}.
	\end{align*}

By using the identity $a_0 = c_{p+1}$ we have
\begin{eqnarray*}
&& a_0 = (c_0^{-1}d_0)^{-p-1} c_0 (c_0^{-1}d_0)^{p+1} \\
&\Longrightarrow& a_0 = (b_n^{-1}b_0)^{-p-1} b_n (b_n^{-1}b_0)^{p+1} \\
&\Longrightarrow& a_0 = (b_n^{-1}b_0)^{-p-1} (b_0 a_0)^{-n} b_0 (b_0 a_0)^{n} (b_n^{-1}b_0)^{p+1} \\
&\Longrightarrow& (b_0 a_0)^{n} (b_n^{-1}b_0)^{p+1} a_0 = b_0 (b_0 a_0)^{n} (b_n^{-1}b_0)^{p+1}.
\end{eqnarray*}
Hence $\omega a_0 = b_0 \omega$ where $\omega = (b_0 a_0)^{n} (b_n^{-1}b_0)^{n+1}$.

Let $a=a_0$ and $b = b_0$.  Then $b_n^{-1}b_0 = (b a)^{-n} b^{-1} (b a)^{n}b = (b a)^{-n} (ab)^n$. Hence
$$
\omega = (b_0 a_0)^{n} (b_n^{-1}b_0)^{p+1}= (ba)^n [(b a)^{-n} (ab)^n]^{p+1} = (ab)^n [(b a)^{-n} (ab)^n]^p. 
$$
This completes the proof. 
\end{proof}

Note that the knot group presentation  in Proposition \ref{group-odd} is different from the one in \cite{HS, MPL, MT} (where $C(2n, 2p+1)$ is denoted by $J(-2n, 2p+1)$), but it can be applied to find exact integral formulas for volumes of cone-manifolds along $C(2n, 2p+1)$. 

\subsection{$\mathrm{SL}_2(\BC)$-representations}

Suppose $\rho\colon G(C(2n, 2p+1))\to  \mathrm{SL}_2(\BC)$ is a nonabelian representation. 
Up to conjugation, we may assume that 
\begin{equation} \label{repn}
A:=\rho(a) = \left[
\begin{array}{cc}
m & 1\\
0 & m^{-1}
\end{array}
\right] \quad \text{and} \quad B:=\rho(b) = \left[
\begin{array}{cc}
m & 0\\
y - m^2 - m^{-2} & m^{-1}
\end{array}
\right]
\end{equation}
where $(m,y) \in \BC^2$ satisfies $\rho(\omega a) = \rho(b \omega)$. Note that $y=\tr \rho(ab)$. 

We now solve the matrix equation  $\rho(\omega a) = \rho(b \omega)$. Recall that $S_k(z)$'s are the Chebychev polynomials defined by $S_0(z)=1$, $S_1(z)=z$ and $S_{k}(z) = z S_{k-1}(z) - S_{k-2}(z)$ for $k \in \BZ$. Note that $S_k(z) = (s^{k+1} - s^{-k-1})/(s - s^{-1})$ if $z = s + s^{-1}$.

The following lemmas are elementary, see e.g. \cite{MT} and references therein. 

\begin{lemma} \label{chev}
For any integer $k$ we have 
$$S^2_k(z) - z S_{k}(z) S_{k-1}(z) + S^2_{k-1}(z) =1.$$
\end{lemma}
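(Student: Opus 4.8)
The plan is to use the closed-form expression for the Chebyshev polynomials of the second kind in terms of a substitution $z = s + s^{-1}$, which is exactly the identity recorded just before the lemma statement: $S_k(z) = (s^{k+1} - s^{-k-1})/(s - s^{-1})$. This reduces the polynomial identity in $z$ to an identity in the single variable $s$ (a Laurent polynomial identity, in fact), which I expect to collapse by direct algebraic simplification. Since both sides are polynomials in $z$, verifying the identity for all $z$ in the range of the map $s \mapsto s + s^{-1}$ — which is all of $\BC$ — suffices.

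Concretely, first I would write $z = s + s^{-1}$ and set $\sigma = s - s^{-1}$, so that $S_k(z) = (s^{k+1} - s^{-k-1})/\sigma$ and $S_{k-1}(z) = (s^k - s^{-k})/\sigma$. Then
\[
S_k^2(z) + S_{k-1}^2(z) = \frac{(s^{k+1}-s^{-k-1})^2 + (s^k - s^{-k})^2}{\sigma^2},
\]
and, using $z = s + s^{-1}$,
\[
z\, S_k(z) S_{k-1}(z) = \frac{(s + s^{-1})(s^{k+1} - s^{-k-1})(s^k - s^{-k})}{\sigma^2}.
\]
So the claim becomes the Laurent-polynomial identity
\[
(s^{k+1}-s^{-k-1})^2 + (s^k - s^{-k})^2 - (s + s^{-1})(s^{k+1} - s^{-k-1})(s^k - s^{-k}) = (s - s^{-1})^2.
\]
Expanding each squared term gives $s^{2k+2} + s^{-2k-2} - 2 + s^{2k} + s^{-2k} - 2$, and expanding the product term gives $s^{2k+2} + s^{2k} + s^{-2k} + s^{-2k-2} - s^{2} - s^{-2} - 2$ (the cross terms $s^{\pm 1}\cdot s^{\mp 1}$ contributing the $-2$). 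Subtracting, the $s^{\pm(2k+2)}$ and $s^{\pm 2k}$ terms all cancel, leaving $-2 + s^2 + s^{-2} = (s - s^{-1})^2$, as required.

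An alternative (and perhaps cleaner for the write-up) route is induction on $k$ using the three-term recurrence $S_k = z S_{k-1} - S_{k-2}$: one checks the base case $k = 1$ directly, namely $S_1^2 - z S_1 S_0 + S_0^2 = z^2 - z^2 + 1 = 1$, and then shows that the quantity $Q_k := S_k^2 - z S_k S_{k-1} + S_{k-1}^2$ satisfies $Q_k = Q_{k-1}$ by substituting $S_k = z S_{k-1} - S_{k-2}$ and simplifying (the cross terms telescope). Since the recurrence is also valid for decreasing $k$, this handles all integers $k$, not just positive ones. There is no real obstacle here — the only thing to be careful about is the degenerate case $s = \pm 1$ (i.e. $z = \pm 2$), where $\sigma = 0$; but since the identity is polynomial in $z$ and holds on the Zariski-dense set $z \notin \{\pm 2\}$, it holds everywhere, so the first approach is fully rigorous. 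I would present the $s$-substitution computation, as it is the shortest.
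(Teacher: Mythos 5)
Your proof is correct. The paper itself does not prove Lemma \ref{chev}; it simply records it as elementary and refers to \cite{MT} and references therein, so there is no in-paper argument to compare against. Your main route uses exactly the closed form $S_k(z)=(s^{k+1}-s^{-k-1})/(s-s^{-1})$ for $z=s+s^{-1}$ that the paper states just before the lemma, and the Laurent-polynomial computation checks out: the sum of squares contributes $s^{2k+2}+s^{-2k-2}+s^{2k}+s^{-2k}-4$, the cross term contributes $s^{2k+2}+s^{2k}+s^{-2k}+s^{-2k-2}-s^{2}-s^{-2}-2$, and the difference is $s^{2}+s^{-2}-2=(s-s^{-1})^{2}$, giving $1$ after dividing by $(s-s^{-1})^{2}$. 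You also correctly dispose of the degenerate values $z=\pm 2$ by polynomiality (or one could just cite continuity), and your alternative telescoping argument $Q_k=Q_{k-1}$ via $S_k=zS_{k-1}-S_{k-2}$, valid in both directions of the recurrence, covers all integers $k$ cleanly; either write-up would be acceptable, with the inductive one having the small advantage of not needing the substitution or the $z=\pm2$ caveat at all.
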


\begin{lemma} \label{power}
Suppose $M  \in \mathrm{SL}_2(\BC)$ and $z= \tr M.$ For any integer $k$ we have 
$$
M^k = S_k(z) I - S_{k-1}(z) M^{-1}. 
$$
\end{lemma}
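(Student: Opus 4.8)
The plan is to prove the identity by induction on $k$, using nothing more than the Cayley--Hamilton relation for $M$. Since $\det M = 1$ and $\tr M = z$, the characteristic polynomial gives $M^2 - zM + I = 0$; multiplying by $M^{-1}$ yields the two facts I will use repeatedly, $M^{-1} = zI - M$ and (applying the same to $M^{-1}$, whose trace is also $z$ because $\det M = 1$) $M^{-2} = zM^{-1} - I$. Substituting $M^{-1} = zI - M$ into the claimed formula and using the Chebyshev recursion in the form $S_k(z) - zS_{k-1}(z) = -S_{k-2}(z)$ shows that the statement is equivalent to the more familiar one $M^k = S_{k-1}(z)\,M - S_{k-2}(z)\,I$, and it is this version I would actually induct on, translating back at the end.

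For the base cases recall $S_{-1}(z) = 0$ (read off from $S_1 = zS_0 - S_{-1}$), $S_0(z) = 1$, $S_1(z) = z$. Then $k=1$ gives $S_0(z)M - S_{-1}(z)I = M$, and $k=2$ gives $S_1(z)M - S_0(z)I = zM - I = M^2$; the case $k=0$ is $M^0 = I = S_{-1}(z)M - S_{-2}(z)I$ after another use of $S_{-2}(z) = -1$, or is simply immediate. For the step upward, assume $M^k = S_{k-1}(z)M - S_{k-2}(z)I$; multiplying on the left by $M$ and replacing $M^2$ by $zM - I$ gives $M^{k+1} = (zS_{k-1}(z) - S_{k-2}(z))M - S_{k-1}(z)I = S_k(z)M - S_{k-1}(z)I$, which is the $(k+1)$ case. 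This settles all $k \ge 0$.

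To reach the negative exponents I would run the recursion in the opposite direction: multiplying $M^k = S_{k-1}(z)M - S_{k-2}(z)I$ on the left by $M^{-1}$, using $M^{-1}M = I$ together with $M^{-1} = zI - M$, and invoking $S_{k-1}(z) - zS_{k-2}(z) = -S_{k-3}(z)$, produces $M^{k-1} = S_{k-2}(z)M - S_{k-3}(z)I$, so downward induction carries the identity to every integer $k$. An alternative, essentially computation-free route: if $M$ is diagonalizable with eigenvalues $s^{\pm1}$ and $z = s + s^{-1}$, then on each eigenline the scalar identity $s^k = S_k(z) - S_{k-1}(z)s^{-1}$ holds --- this is just the closed form $S_k(z) = (s^{k+1}-s^{-k-1})/(s-s^{-1})$ --- so $M^k = S_k(z)I - S_{k-1}(z)M^{-1}$ for such $M$; since both sides are polynomial in the entries of $M$ on the variety $\det M = 1$ (note $M^{-1}$ has polynomial entries there, so $M^k$ does for every $k \in \BZ$) and diagonalizable matrices are dense, the identity extends to all of $\mathrm{SL}_2(\BC)$ by continuity.

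There is no real obstacle here; the lemma is genuinely elementary. The only points requiring a little care are the two-sided indexing of the $S_k$'s --- in particular fixing the convention $S_{-1}(z) = 0$, which is what makes the base cases and the extension to $k \le 0$ come out right --- and keeping signs consistent when passing between the $M^{-1}$-form in the statement and the $M$-form used in the induction.
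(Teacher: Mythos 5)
Your proof is correct and complete. The paper does not actually prove this lemma --- it simply records it as elementary and cites \cite{MT} --- so there is nothing to diverge from; your Cayley--Hamilton argument, passing to the equivalent form $M^k = S_{k-1}(z)M - S_{k-2}(z)I$ and inducting in both directions, is exactly the standard proof the citation has in mind. You handle the only delicate points correctly: the two-sided conventions $S_{-1}(z)=0$, $S_{-2}(z)=-1$, and the downward induction (or, in your alternative, the density argument) needed to cover negative exponents, which is what the phrase ``for any integer $k$'' requires; the eigenvalue computation using $S_k(z)=(s^{k+1}-s^{-k-1})/(s-s^{-1})$ also matches the closed form quoted in the paper.
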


Let $x= \tr \rho(a)=\tr \rho(b) = m+m^{-1}$. Let  $U=\rho((a^{-1} b^{-1})^{n} (ab)^{n})$ and $u = \tr U$. 

\begin{proposition} \label{u}
	We have
	\[
		u = 2 + (y-2) (y+2-x^2) S^2_{n-1}(y).
	\]
\end{proposition}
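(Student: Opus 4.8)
The plan is to reduce everything to traces of products of $P:=\rho(ab)$ and $Q:=\rho(ba)$ and then feed the result into the two Chebyshev lemmas. Since $a^{-1}b^{-1}=(ba)^{-1}$, we have $U=\rho((a^{-1}b^{-1})^{n}(ab)^{n})=Q^{-n}P^{n}$, and $\tr P=\tr Q=\tr\rho(ab)=y$ because conjugate elements share a trace (note $Q=A^{-1}PA$). Thus the whole computation takes place among $2\times 2$ matrices whose only relevant traces are the quantities $x$ and $y$ already in play.

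First I would expand the $n$-th powers using Lemma \ref{power}. Applied to $M=P$ it gives $P^{n}=S_n(y)I-S_{n-1}(y)P^{-1}$, and applied to $M=Q^{-1}$ (whose trace is again $y$) it gives $Q^{-n}=S_n(y)I-S_{n-1}(y)Q$. Multiplying,
$$U=\bigl(S_n(y)I-S_{n-1}(y)Q\bigr)\bigl(S_n(y)I-S_{n-1}(y)P^{-1}\bigr),$$
and taking traces (write $S_k=S_k(y)$),
$$u=2S_n^2-S_nS_{n-1}\bigl(\tr Q+\tr P^{-1}\bigr)+S_{n-1}^2\,\tr\bigl(QP^{-1}\bigr).$$
Here $\tr Q=\tr P^{-1}=y$, so the only remaining ingredient is $\tr(QP^{-1})=\tr(BAB^{-1}A^{-1})$, the trace of a commutator.

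Next I would evaluate that commutator trace with the classical $\mathrm{SL}_2$ identity $\tr(XYX^{-1}Y^{-1})=(\tr X)^2+(\tr Y)^2+(\tr XY)^2-\tr X\,\tr Y\,\tr XY-2$, taken with $X=B$ and $Y=A$; since $\tr A=\tr B=x$ and $\tr BA=y$ this yields $\tr(QP^{-1})=2x^2+y^2-x^2y-2$. (If one prefers, the same value can be obtained by a direct computation with the explicit matrices in \eqref{repn}.) Substituting gives $u=2S_n^2-2yS_nS_{n-1}+(2x^2+y^2-x^2y-2)S_{n-1}^2$.

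Finally, Lemma \ref{chev} gives $S_n^2-yS_nS_{n-1}=1-S_{n-1}^2$, hence $2S_n^2-2yS_nS_{n-1}=2-2S_{n-1}^2$ and so $u=2+(2x^2+y^2-x^2y-4)S_{n-1}^2$; expanding $(y-2)(y+2-x^2)=y^2+2x^2-x^2y-4$ finishes the proof. I do not expect a genuine obstacle here beyond careful bookkeeping; the one spot that needs attention is the index arithmetic for Chebyshev polynomials at negative indices, which is precisely why I would compute $Q^{-n}$ by applying Lemma \ref{power} to $Q^{-1}$ rather than directly to $Q$ with a negative exponent.
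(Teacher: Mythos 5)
Your proof is correct and follows essentially the same route as the paper: expand $(A^{-1}B^{-1})^n$ and $(AB)^n$ via Lemma \ref{power}, take traces, and simplify with Lemma \ref{chev}, reducing everything to the commutator trace $\tr(BAB^{-1}A^{-1})$. The only (harmless) variation is that you evaluate that trace with the Fricke identity $\tr(XYX^{-1}Y^{-1})=(\tr X)^2+(\tr Y)^2+(\tr XY)^2-\tr X\,\tr Y\,\tr XY-2$, whereas the paper computes it directly from the explicit matrices in \eqref{repn}; both give $2x^2+y^2-x^2y-2$, i.e.\ $(y-2)(y+2-x^2)+2$, so the conclusions agree.
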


\begin{proof}
	Recall that $A = \rho(a)$ and $B= \rho(b)$. Since $\tr A^{-1} B^{-1} = y$ and $\tr AB = y$, by Lemma \ref{power} we have
	\begin{align*}
		U &= (A^{-1} B^{-1})^{n} (AB)^{n} \\
		&= (S_n(y)I - S_{n-1}(y)BA)  (S_n(y) I - S_{n-1}(y) B^{-1}A^{-1})\\
		&= S_n^2(y) I + S_{n-1}^2(y) BA B^{-1}A^{-1} - S_n(y)S_{n-1}(y) (B^{-1}A^{-1}  + BA).
	\end{align*}
	Taking trace we obtain
	$$
	u= \tr U = 2S_n^2(y) + (\tr BA B^{-1}A^{-1}) S_{n-1}^2(y)  - 2y S_n(y)S_{n-1}(y).
	$$
	
	By Lemma \ref{chev} we have $S_n^2(y) - y S_n(y) S_{n-1}(y)  + S_{n-1}^2(y) =1$. This implies that 
	$$
	u =  2 +  (\tr BA B^{-1}A^{-1}-2) S_{n-1}^2(y).
	$$
Finally, by a direct calculation using the matrix form \eqref{repn} we have	
$$
\tr BA B^{-1}A^{-1}-2 = (y-2) (y - m^2 - m^{-2}) = (y-2) (y+2-x^2).
$$ 
Hence $u = 2 + (y-2) (y+2-x^2) S^2_{n-1}(y)$.
\end{proof}

\begin{proposition} \label{R}
We have 
$$
\rho(\omega a)  -  \rho(b \omega) =  
\left[
\begin{array}{cc}
0 & \Phi_{C(2n, 2p+1)}(x,y)\\
(x^2-2-y) \Phi_{C(2n, 2p+1)}(x,y) & 0
\end{array}
\right]
$$
where 
$$ 
\Phi_{C(2n, 2p+1)}(x,y) = (S_n(y) - S_{n-1}(y)) S_p(u)  - (S_{n-1}(y) - S_{n-2}(y)) S_{p-1}(u).
$$
\end{proposition}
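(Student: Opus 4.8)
The plan is to compute the matrix $\rho(\omega a) - \rho(b\omega)$ directly, using Lemma~\ref{power} to expand the powers of $AB$, $A^{-1}B^{-1}$, and $U$ that appear in $\omega$. Recall $\omega = (ab)^n\,[(a^{-1}b^{-1})^n(ab)^n]^p = (AB)^n\,U^p$ under $\rho$, where $U = (A^{-1}B^{-1})^n(AB)^n$ as in Proposition~\ref{u}. First I would apply Lemma~\ref{power} to $U^p$, writing $U^p = S_p(u)I - S_{p-1}(u)U^{-1}$, and to $(AB)^n = S_n(y)I - S_{n-1}(y)(AB)^{-1}$ (using $\tr AB = y$), so that $\rho(\omega) = (AB)^n U^p$ becomes an explicit $2\times 2$ matrix whose entries are polynomials in $m$, $y$, $S_j(y)$, and $S_j(u)$. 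Then $\rho(\omega a) - \rho(b\omega) = \rho(\omega)A - B\rho(\omega)$ is a straightforward (if tedious) matrix subtraction.

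The key structural fact to exploit is that, since $\rho$ is a representation of a group in which $\omega a = b\omega$ holds \emph{for the abelianized constraint to even make sense}, the difference matrix must be highly constrained. Concretely, one checks — exactly as in the standard two-bridge Riley-polynomial computation, see \cite{MT} — that $\rho(\omega)A - B\rho(\omega)$ has zero diagonal and that its two off-diagonal entries are proportional, with the constant of proportionality being $x^2 - 2 - y = -\tr[A,B] + \text{const}$ forced by the specific matrix forms in \eqref{repn}. So the real content is the identification of the single scalar $\Phi_{C(2n,2p+1)}(x,y)$ appearing in the $(1,2)$-entry. I would extract the $(1,2)$-entry of $\rho(\omega)A - B\rho(\omega)$, substitute the expansions above, and collect terms. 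The coefficient of $S_p(u)$ should organize into $S_n(y) - S_{n-1}(y)$ and the coefficient of $S_{p-1}(u)$ into $-(S_{n-1}(y) - S_{n-2}(y))$; to see this cleanly I would first record the auxiliary fact that, under $\rho$, the matrix $(AB)^n$ and its relation to $U$ satisfy $U(AB)^n$-type identities, and that $(A^{-1}B^{-1})^n(AB)^n A = $ something whose $(1,2)$ comparison with $B(A^{-1}B^{-1})^n(AB)^n$ produces the $S_n - S_{n-1}$ pattern — essentially the $p=0$ case of the claim, which is the base case.

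The cleanest route is probably induction on $p$. For $p = 0$ the claim reduces to computing $\rho((ab)^n a) - \rho(b(ab)^n)$, i.e. $(AB)^n A - B(AB)^n$, whose $(1,2)$-entry I would verify equals $S_n(y) - S_{n-1}(y)$ by Lemma~\ref{power} and a short computation with \eqref{repn} (and Lemma~\ref{chev} to simplify). For the inductive step, one writes $\omega_{p} = \omega_{p-1}\cdot U$ — or rather uses $\rho(\omega)$ for $C(2n,2p+1)$ equals $\rho(\omega)$ for $C(2n,2p-1)$ times $U$ — and propagates the off-diagonal structure through multiplication by $U$, using $\tr U = u$ and the Chebyshev recursion $S_{p}(u) = u S_{p-1}(u) - S_{p-2}(u)$ to pass from step $p-1$ to step $p$. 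The bookkeeping needs the commutation behavior of $U$ with $A$ and $B$ up to the relevant order, which is where one must be careful.

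The main obstacle I anticipate is purely computational: controlling the off-diagonal entries through the product $(AB)^n U^p$ without the expressions exploding. The conceptual points (zero diagonal, proportional off-diagonals with ratio $x^2-2-y$, Chebyshev collapse of the $n$-dependence) are all standard for two-bridge groups; the work is in verifying that the specific combination $(S_n - S_{n-1})S_p(u) - (S_{n-1} - S_{n-2})S_{p-1}(u)$ — and not some other linear combination of $S_p(u), S_{p-1}(u)$ with polynomial-in-$y$ coefficients — is what drops out, which I would pin down via the $p=0$ base case plus the recursion, cross-checking against the known Riley polynomials of $C(2n,3)$ and the twist knots.
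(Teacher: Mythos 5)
Your first route — expand $\rho(\omega)=(AB)^nU^p$ by applying Lemma~\ref{power} twice, once to $U^p=S_p(u)I-S_{p-1}(u)U^{-1}$ (noting $(AB)^nU^{-1}=(BA)^n$) and once to $(AB)^n$ and $(BA)^n$, then compute $\rho(\omega)A-B\rho(\omega)$ and collect terms — is exactly the paper's proof, and it is shorter than you anticipate: the whole difference reduces to the three matrices $A-B$, $B^{-1}-A^{-1}$ and $A^{-1}B^{-1}A-BA^{-1}B^{-1}$, each of which is computed directly from \eqref{repn}, has zero diagonal, and has $(2,1)$-entry equal to $(m^2+m^{-2}-y)=x^2-2-y$ times its $(1,2)$-entry (so the proportionality is checked blockwise, not inferred; your parenthetical identification of $x^2-2-y$ with $-\tr[A,B]+\mathrm{const}$ is not right, since $\tr ABA^{-1}B^{-1}-2=(y-2)(y+2-x^2)$). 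The only further ingredient is the Chebyshev recursion in the form $S_n(y)-(y-1)S_{n-1}(y)=S_{n-1}(y)-S_{n-2}(y)$, which converts the raw coefficient of $S_{p-1}(u)$ into the stated one; no ``explosion'' occurs and no induction is needed. Your preferred alternative, induction on $p$, is also viable but should be organized via Cayley--Hamilton: $U^p=uU^{p-1}-U^{p-2}$ has \emph{scalar} coefficients, so $W_p:=(AB)^nU^p$ satisfies $W_p=uW_{p-1}-W_{p-2}$ and hence $W_pA-BW_p=u(W_{p-1}A-BW_{p-1})-(W_{p-2}A-BW_{p-2})$; the ``commutation behavior of $U$ with $A$ and $B$'' that worries you never enters. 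Note, however, that a three-term recursion requires \emph{two} base cases (e.g.\ $p=0$, giving $\Phi=S_n(y)-S_{n-1}(y)$, and $p=-1$, where $\omega=(ba)^n$ and $\Phi=S_{n-1}(y)-S_{n-2}(y)$), since both $\Phi_{C(2n,2p+1)}$ and the difference matrices satisfy the same recursion in $p$; with that fixed, your induction closes, but it buys nothing over the direct three-line expansion the paper uses.
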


\begin{proof}
Let $W= \rho(\omega)$. Then $W = \rho((ab)^n [(b a)^{-n} (ab)^n]^p)= (AB)^n U^p$. Since $\tr U = u$, by Lemma \ref{power} we have
\begin{eqnarray*}
W &=& (AB)^n (S_p(u) I - S_{p-1}(u) U^{-1}) \\
&=& S_p(u) (AB)^n - S_{p-1}(u) (BA)^n \\
&=& S_p(u) (S_n(y) I - S_{n-1}(y) B^{-1}A^{-1}) - S_{p-1}(u) (S_n(y) I - S_{n-1}(y) A^{-1}B^{-1}).
\end{eqnarray*}
Hence 
\begin{eqnarray*}
WA - BW &=& S_p(u) [S_n(y) (A-B) - S_{n-1}(y) (B^{-1}-A^{-1})] \\
&& - \, S_{p-1}(u) [S_n(y) (A-B)  - S_{n-1}(y) (A^{-1}B^{-1}A - B A^{-1}B^{-1})].
\end{eqnarray*}
By direct calculations using the matrix form \eqref{repn} we have	
\begin{eqnarray*}
A - B &=& \left[
\begin{array}{cc}
0 & 1\\
m^2 + m^{-2} - y & 0
\end{array}
\right], \\
B^{-1}-A^{-1} &=& \left[
\begin{array}{cc}
0 & 1\\
m^2 + m^{-2} - y & 0
\end{array}
\right], \\
A^{-1}B^{-1}A - B A^{-1}B^{-1} &=& \left[
\begin{array}{cc}
0 & y-1\\
(y-1)(m^2 + m^{-2} - y) & 0
\end{array}
\right]. 
\end{eqnarray*}
Hence $WA - B W = \left[
\begin{array}{cc}
0 & \Phi\\
(m^2 + m^{-2} -y) \Phi & 0
\end{array}
\right]$ where 
$$
\Phi = S_p(u) (S_n(y)  - S_{n-1}(y)) - S_{p-1} (u) (S_n(y)  - (y-1)S_{n-1}(y)).
$$
Finally, since $S_n(y)  - (y-1)S_{n-1}(y) = S_{n-1}(y) - S_{n-2}(y)$ the proposition follows.
\end{proof}

Proposition \ref{R} implies that $\rho(\omega a) = \rho(b \omega)$ if and only if $\Phi_{C(2n, 2p+1)}(x,y) =0$.

\begin{remark}
The polynomial $\Phi_{C(2n, 2p+1)}(x,y)$ is called the Riley polynomial of the two-bridge knot $C(2n, 2p+1)$, see \cite{Ri}.
\end{remark}

\subsection{Longitude and trignometric identity}

If we choose the meridian to be $\mu=b$ then the canonical longitude is $\lambda = \omega \omega^* b^{-4n}$, where $\omega^*$ is the word obtained from $\omega$ by writing the letters in $\omega$ in reversed order. Note that since $\omega a = b \omega$ and $\omega^* b = a \omega^*$ we have $\omega \omega^* b = \omega a \omega^* = b \omega \omega^*$. 

Since $\rho(\mu) = \left[
\begin{array}{cc}
m & 0\\
* & m^{-1}
\end{array}
\right]$ we have $\rho(\lambda) = \left[
\begin{array}{cc}
l & 0\\
* & l^{-1}
\end{array}
\right]$. By a similar calculation as in \cite[Equation (2.3)]{HS} we have $lm^{4n} = - W_{12} \big/ \widetilde{W_{12}}$, where $W_{12}$ is the $(1,2)$-entry of $W=\rho(\omega)$ and $\widetilde{W_{12}}$ is obtained from $W_{12}$ by replacing $m$ by $m^{-1}$. Note that $W_{12}$ is a function in $m$ and $y$. 

\begin{proposition} \label{12}
We have 
$$
W_{12} = \left(m^{-1} - m \frac{S_n(y) - S_{n-1}(y)}{S_{n-1}(y) - S_{n-2}(y)} \right) S_{p}(u)S_{n-1}(y). 
$$
\end{proposition}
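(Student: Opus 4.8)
The plan is to reuse the explicit expression for $W=\rho(\omega)$ obtained in the proof of Proposition \ref{R}, read off its $(1,2)$-entry, and then simplify using the defining relation $\Phi_{C(2n,2p+1)}(x,y)=0$ of the representation.

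First I would recall from the proof of Proposition \ref{R} that
$$
W = S_p(u)\bigl(S_n(y)I - S_{n-1}(y)B^{-1}A^{-1}\bigr) - S_{p-1}(u)\bigl(S_n(y)I - S_{n-1}(y)A^{-1}B^{-1}\bigr).
$$
Since $B^{-1}A^{-1}=(AB)^{-1}=yI-AB$ and $A^{-1}B^{-1}=(BA)^{-1}=yI-BA$ (both $AB$ and $BA$ have trace $y$ and determinant $1$), a one-line matrix computation from \eqref{repn} gives $(AB)_{12}=m^{-1}$ and $(BA)_{12}=m$, hence $(B^{-1}A^{-1})_{12}=-m^{-1}$ and $(A^{-1}B^{-1})_{12}=-m$. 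Taking the $(1,2)$-entry of the displayed expression for $W$ (the identity contributes nothing off-diagonal) then yields
$$
W_{12} = S_{n-1}(y)\bigl(m^{-1}S_p(u) - m\,S_{p-1}(u)\bigr).
$$

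Next, since $\rho$ is a genuine representation of $G(C(2n,2p+1))$, Proposition \ref{R} forces $\Phi_{C(2n,2p+1)}(x,y)=0$, that is,
$$
(S_{n-1}(y)-S_{n-2}(y))\,S_{p-1}(u) = (S_n(y)-S_{n-1}(y))\,S_p(u).
$$
Solving this for $S_{p-1}(u)$ and substituting into the formula for $W_{12}$ gives
$$
W_{12} = S_{n-1}(y)\,S_p(u)\left(m^{-1} - m\,\frac{S_n(y)-S_{n-1}(y)}{S_{n-1}(y)-S_{n-2}(y)}\right),
$$
which is the claimed identity.

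I do not expect a serious obstacle here; the only point requiring a word of care is that the last substitution divides by $S_{n-1}(y)-S_{n-2}(y)$, so one should note this quantity is nonzero on the nonabelian representation variety (otherwise $\Phi_{C(2n,2p+1)}=0$ would already force $S_p(u)=0$), or alternatively keep the symmetric form $W_{12}=S_{n-1}(y)\bigl(m^{-1}S_p(u)-m\,S_{p-1}(u)\bigr)$ and observe that the two expressions agree exactly on the locus $\Phi_{C(2n,2p+1)}=0$. This asymmetric form is the useful one because $u$ and $y$ are invariant under $m\mapsto m^{-1}$, so the common factor $S_{n-1}(y)S_p(u)$ will cancel when forming the ratio $\widetilde{W_{12}}/W_{12}$ that determines the longitude eigenvalue $l$.
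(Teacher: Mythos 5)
Your proposal is correct and follows essentially the same route as the paper: extract $W$ from the proof of Proposition \ref{R}, read off the $(1,2)$-entry using $(B^{-1}A^{-1})_{12}=-m^{-1}$ and $(A^{-1}B^{-1})_{12}=-m$, and eliminate $S_{p-1}(u)$ via $\Phi_{C(2n,2p+1)}(x,y)=0$. Your closing remark about the division by $S_{n-1}(y)-S_{n-2}(y)$ is a point the paper passes over silently, so it is a harmless (indeed slightly more careful) addition.
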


\begin{proof}
From the proof of Proposition \ref{R} we have
$$
W = S_p(u) (S_n(y) I - S_{n-1}(y) B^{-1}A^{-1}) - S_{p-1}(u) (S_n(y) I - S_{n-1}(y) A^{-1}B^{-1}).
$$
Taking the $(1,2)$-entry we have 
$$
W_{12} = - S_p(u) S_{n-1}(y) (B^{-1}A^{-1})_{12} + S_{p-1}(u) S_{n-1}(y) (A^{-1}B^{-1})_{12}.
$$
Since $(B^{-1}A^{-1})_{12} = - m^{-1}$ and $(A^{-1}B^{-1})_{12} = -m$, we obtain 
$$
W_{12} = (m^{-1}  S_p(u) - m S_{p-1}(u)) S_{n-1}(y).
$$

We now simplify $W_{12}$ by using $\Phi_{C(2n, 2p+1)}(x,y) =0$. Since $(S_n(y) - S_{n-1}(y)) S_p(u)  - (S_{n-1}(y) - S_{n-2}(y)) S_{p-1}(u)=0$,  we have $S_{p-1}(u) = \frac{S_n(y) - S_{n-1}(y)}{S_{n-1}(y) - S_{n-2}(y)} S_p(u)$. Hence
$$
W_{12} = \left(m^{-1} - m \frac{S_n(y) - S_{n-1}(y)}{S_{n-1}(y) - S_{n-2}(y)} \right) S_{p}(u)S_{n-1}(y)
$$
as claimed. 
\end{proof}

\begin{proposition} \label{fn}
Let $f_n(y) = \frac{yS_{n-1}(y) - 2S_n(y)}{(y-2)S_{n-1}(y)}$. Then if $\Phi_{C(2n, 2p+1)}(x,y) =0$, where $x=m+m^{-1}$, we have 
$$
 f_n(y) = - \frac{\ell^{1/2} +\ell^{-1/2}}{\ell^{1/2}-\ell^{-1/2}} \cdot \frac{m+m^{-1}}{m-m^{-1}}
$$ 
for $\ell = lm^{4n}$.
\end{proposition}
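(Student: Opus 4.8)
The plan is to start from the relation $\ell = lm^{4n} = -\widetilde{W_{12}}/W_{12}$ recalled from \cite{HS}, insert the closed form for $W_{12}$ provided by Proposition \ref{12}, and then carry out an elementary simplification of the resulting rational function of $m$, the only non-formal inputs being two applications of the Chebyshev recurrence.

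First I would abbreviate $r := \dfrac{S_n(y)-S_{n-1}(y)}{S_{n-1}(y)-S_{n-2}(y)}$, so that Proposition \ref{12} reads $W_{12} = (m^{-1}-mr)\,S_p(u)S_{n-1}(y)$. The key observation is that every factor here other than the prefactor $m^{-1}-mr$ is invariant under $m\mapsto m^{-1}$: the polynomials $S_k(y)$ depend only on $y$, and by Proposition \ref{u} the trace $u = 2+(y-2)(y+2-x^2)S_{n-1}^2(y)$ depends on $m$ only through $x^2=(m+m^{-1})^2$. Hence $\widetilde{W_{12}} = (m-m^{-1}r)\,S_p(u)S_{n-1}(y)$, and therefore
$$
\ell \;=\; -\,\frac{\widetilde{W_{12}}}{W_{12}} \;=\; -\,\frac{m-m^{-1}r}{m^{-1}-mr}.
$$

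Next I would compute $\dfrac{\ell^{1/2}+\ell^{-1/2}}{\ell^{1/2}-\ell^{-1/2}} = \dfrac{\ell+1}{\ell-1}$ (clear $\ell^{1/2}$; note the ratio is independent of the choice of square root, since replacing $\ell^{1/2}$ by $-\ell^{1/2}$ changes the sign of numerator and denominator alike). A direct calculation from the displayed expression for $\ell$ gives $\ell+1 = \dfrac{(m^{-1}-m)(1+r)}{m^{-1}-mr}$ and $\ell-1 = \dfrac{(m+m^{-1})(r-1)}{m^{-1}-mr}$, whence
$$
-\,\frac{\ell^{1/2}+\ell^{-1/2}}{\ell^{1/2}-\ell^{-1/2}}\cdot\frac{m+m^{-1}}{m-m^{-1}} \;=\; -\,\frac{(m^{-1}-m)(1+r)}{(m-m^{-1})(r-1)} \;=\; \frac{1+r}{r-1}.
$$

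It then remains to identify $\dfrac{1+r}{r-1}$ with $f_n(y)$ using only the recurrence $S_k=yS_{k-1}-S_{k-2}$. Putting $r$ over its common denominator, $1+r = \dfrac{S_n(y)-S_{n-2}(y)}{S_{n-1}(y)-S_{n-2}(y)}$ and $r-1 = \dfrac{S_n(y)-2S_{n-1}(y)+S_{n-2}(y)}{S_{n-1}(y)-S_{n-2}(y)} = \dfrac{(y-2)S_{n-1}(y)}{S_{n-1}(y)-S_{n-2}(y)}$, where the last step uses $S_n+S_{n-2}=yS_{n-1}$; hence $\dfrac{1+r}{r-1} = \dfrac{S_n(y)-S_{n-2}(y)}{(y-2)S_{n-1}(y)} = \dfrac{2S_n(y)-yS_{n-1}(y)}{(y-2)S_{n-1}(y)} = f_n(y)$, the middle equality being once more the recurrence in the form $S_{n-2}=yS_{n-1}-S_n$. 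The argument has no genuine obstacle: the only points deserving attention are the $m\mapsto m^{-1}$ invariance of $u$, which is exactly what makes $\widetilde{W_{12}}$ so simple, and the sign bookkeeping when simplifying $(\ell+1)/(\ell-1)$ and multiplying by $\tfrac{m+m^{-1}}{m-m^{-1}}$.
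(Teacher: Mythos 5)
Your proposal is correct and follows essentially the same route as the paper: substitute the simplified form of $W_{12}$ from Proposition \ref{12} into $\ell = -\widetilde{W_{12}}/W_{12}$, reduce $\frac{\ell^{1/2}+\ell^{-1/2}}{\ell^{1/2}-\ell^{-1/2}}$ to $\frac{\ell+1}{\ell-1}$, and identify $\frac{1+r}{r-1}$ with $f_n(y)$ via the Chebyshev recurrence. The only (welcome) difference is that you make explicit the $m\mapsto m^{-1}$ invariance of $u$ and of the $S_k(y)$ factors, which the paper leaves implicit when writing $\ell = -\frac{m-m^{-1}r}{m^{-1}-mr}$.
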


\begin{proof} 
Since $\ell = lm^{4n}= - W_{12} \big/ \widetilde{W_{12}}$, by Proposition \ref{12} we have
$$
\ell = - \frac{m^{-1} - m r} {m - m^{-1} r}=  \frac{m^2 r-1}{m^2 - r}
$$
where $r =\frac{S_n(y) - S_{n-1}(y)}{S_{n-1}(y) - S_{n-2}(y)}$.
This implies that 
$$
\frac{\ell+1}{\ell-1} = \frac{(m^2-1)(r+1)}{(m^2+1)(r-1)}.
$$
Hence 
$$
\frac{\ell+1}{\ell-1} \cdot \frac{m^2+1}{m^2-1} = \frac{r+1}{r-1} =  \frac{S_n(y)-S_{n-2}(y)}{(y-2)S_{n-1}(y)} = - f_n(y). 
$$
\end{proof}

\underline{Hyperbolic case}: Let $K(\alpha)$ be a hyperbolic 3-dimensional cone-manifold whose singular set is $K=C(2n, 2p+1)$ with cone angle $\alpha \in [0, 2\pi)$. Up to conjugation in $\mathrm{SL}_2(\BC)$, 
$$
\rho_\alpha (\mu) = \pm \left[
\begin{array}{cc}
e^{i\alpha/2} & 0 \\
0 & e^{-i\alpha/2}
\end{array}
\right],  \quad \rho_\alpha(\lambda) = \left[
\begin{array}{cc}
e^{\gamma_\alpha/2} & 0\\
0 & e^{-\gamma_\alpha/2}
\end{array}
\right]
$$
where $\gamma_\alpha = l_\alpha + i \varphi_\alpha$ is the complex length of the singular geodesic $K$ in $K(\alpha)$, $l_\alpha > 0$ is the real length of $K$, and $\varphi_\alpha \in [-2\pi, 2\pi)$ is the angle of the lifted holonomy of $K$. 

\begin{proposition} \label{trig} In the hyperbolic case we have 
$$
 i \coth \left( \frac{\gamma_\alpha +  4n i  \alpha}{4} \right) \cot \left( \frac{\alpha}{2} \right) = f_n(y).
$$ 
In particular, we have $\mathrm{Im} (f_n(y)) > 0$.
\end{proposition}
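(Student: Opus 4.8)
The plan is to read off the identity directly from Proposition \ref{fn} by plugging in the diagonal holonomy data of the hyperbolic cone-manifold. In the hyperbolic case the eigenvalues of $\rho_\alpha(\mu)$ and $\rho_\alpha(\lambda)$ are $m=e^{i\alpha/2}$ and $l=e^{\gamma_\alpha/2}$, so
$$
\ell := l m^{4n} = e^{(\gamma_\alpha+4ni\alpha)/2}, \qquad \ell^{1/2}=e^{(\gamma_\alpha+4ni\alpha)/4},
$$
for one choice of square root, and hence
$$
\frac{\ell^{1/2}+\ell^{-1/2}}{\ell^{1/2}-\ell^{-1/2}} = \coth\!\left(\frac{\gamma_\alpha+4ni\alpha}{4}\right).
$$
This ratio is invariant under $\ell^{1/2}\mapsto -\ell^{1/2}$, so it is well defined. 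Likewise $\dfrac{m+m^{-1}}{m-m^{-1}}=\dfrac{2\cos(\alpha/2)}{2i\sin(\alpha/2)}=-i\cot\dfrac{\alpha}{2}$. Substituting both into the formula of Proposition \ref{fn},
$$
f_n(y) = -\coth\!\left(\frac{\gamma_\alpha+4ni\alpha}{4}\right)\cdot\left(-i\cot\frac{\alpha}{2}\right) = i\coth\!\left(\frac{\gamma_\alpha+4ni\alpha}{4}\right)\cot\frac{\alpha}{2},
$$
which is the asserted trigonometric identity.

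For the positivity statement, write $t=\dfrac{\gamma_\alpha+4ni\alpha}{4}$, so that $\mathrm{Re}(t)=l_\alpha/4>0$ since $\gamma_\alpha=l_\alpha+i\varphi_\alpha$ with $l_\alpha>0$. A direct rationalization gives
$$
\mathrm{Re}\,\coth(x+iy)=\frac{\sinh 2x}{\cosh 2x-\cos 2y},
$$
whence $\mathrm{Re}\,\coth(t)=\dfrac{\sinh(l_\alpha/2)}{\cosh(l_\alpha/2)-\cos\!\big((\varphi_\alpha+4n\alpha)/2\big)}>0$, because $l_\alpha>0$ forces $\sinh(l_\alpha/2)>0$ and $\cosh(l_\alpha/2)>1\ge\cos(\cdot)$. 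Writing $\coth(t)=P+iQ$ with $P>0$ and using that the hyperbolic range $\alpha\in(0,\alpha_K)\subset(0,\pi)$ gives $\cot\frac{\alpha}{2}>0$, we obtain $f_n(y)=i\cot\frac{\alpha}{2}(P+iQ)$, so $\mathrm{Im}(f_n(y))=\cot\frac{\alpha}{2}\,P>0$.

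\textbf{Main obstacle.} There is no genuinely difficult step: everything is a substitution and an elementary hyperbolic-trigonometric computation. The only points needing care are (i) checking branch-independence of $\frac{\ell^{1/2}+\ell^{-1/2}}{\ell^{1/2}-\ell^{-1/2}}$, and, more importantly, (ii) making sure the sign conventions in Proposition \ref{fn} and in the diagonal normalization of $\rho_\alpha$ are consistent with the conclusion, which in the end relies on the bound $\alpha_K<\pi$ from the geometric transition picture to guarantee $\cot\frac{\alpha}{2}>0$. I expect (ii) to be the only place demanding attention.
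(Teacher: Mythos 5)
Your proposal is correct and follows essentially the same route as the paper: substitute $m=e^{i\alpha/2}$ and $\ell=lm^{4n}=e^{(\gamma_\alpha+4ni\alpha)/2}$ into Proposition \ref{fn} to get the identity, then deduce $\mathrm{Im}(f_n(y))>0$ from $\cot\frac{\alpha}{2}>0$ together with $\mathrm{Re}\,\coth\bigl(\frac{\gamma_\alpha+4ni\alpha}{4}\bigr)>0$, which holds because $\mathrm{Re}(\gamma_\alpha+4ni\alpha)=l_\alpha>0$. Your explicit formula for $\mathrm{Re}\,\coth(x+iy)$ just spells out a step the paper leaves implicit.
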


\begin{proof} 
Since $m = e^{i \alpha/2}$ and $\ell = lm^{4n}  = e^{(\gamma_\alpha + 4ni \alpha)/2}$, we obtain
\begin{eqnarray*}
f_n(y) &=& - \frac{\ell^{1/2} +\ell^{-1/2}}{\ell^{1/2}-\ell^{-1/2}} \cdot \frac{m+m^{-1}}{m-m^{-1}} \\
&=& i \coth \left( \frac{\gamma_\alpha +  4n i  \alpha}{4} \right) \cot \left( \frac{\alpha}{2} \right).
\end{eqnarray*} 
Note that $\cot \left( \frac{\alpha}{2} \right) > 0$ and $\mathrm{Re} (\gamma_\alpha +  4n i  \alpha) = l_\alpha>0$. Hence 
$$
\mathrm{Re}(-i f_n(y)) =  \cot \left( \frac{\alpha}{2} \right) \mathrm{Re} \coth \left( \frac{\gamma_\alpha +  4n i  \alpha}{4} \right) >0.
$$
This implies that $\mathrm{Im} (f_n(y)) > 0$.
\end{proof}

\underline{Spherical case}: Let $K(\alpha)$ be a spherical 3-dimensional cone-manifold whose singular set is $K=C(2n, 2p+1)$ with cone angle $\alpha \in [0, 2\pi)$. Up to conjugation in $\mathrm{SU}(2) \times \mathrm{SU}(2)$, we can assume that
\begin{eqnarray*}
\rho_\alpha(\mu) &=& \left( \pm \left[
\begin{array}{cc}
e^{i\alpha/2} & 0 \\
0 & e^{-i\alpha/2}
\end{array}
\right], \pm \left[
\begin{array}{cc}
e^{i\alpha/2} & 0 \\
0 & e^{-i\alpha/2}
\end{array}
\right] \right),  \\
\rho_\alpha(\lambda) &=& \left( \left[
\begin{array}{cc}
e^{i\gamma} & 0 \\
0 & e^{-i\gamma}
\end{array}
\right],  \left[
\begin{array}{cc}
e^{i\phi} & 0 \\
0 & e^{-i\phi}
\end{array}
\right]\right).
\end{eqnarray*}
In this case $l_\alpha = \gamma - \phi$ is the length of the knot $K$, and $\varphi_\alpha = \gamma + \phi \in [-2\pi, 2\pi)$ is the angle of the lifted holonomy of $K$. Note that $\gamma = \frac{1}{2}(\varphi_\alpha + l_\alpha)$ and $\phi = \frac{1}{2}(\varphi_\alpha - l_\alpha)$. Hence $m = e^{i\alpha/2}$ and $\ell=e^{i(\varphi_\alpha \pm l_\alpha)/2}$. 

\begin{proposition} \label{trig-spherical} In the spherical case we have 
$$
\cot \left( \frac{\varphi_\alpha \pm l_\alpha+  4n  \alpha}{4} \right) \cot \left( \frac{\alpha}{2} \right) = f_n(y_{\pm}).
$$ 
In particular, we have $f_n(y_{\pm}) \in \BR$.
\end{proposition}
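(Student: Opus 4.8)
The plan is to run the proof of Proposition \ref{trig} separately on the two $\mathrm{SU}(2)$-factors of the spherical holonomy. Write $\rho_\alpha = (\rho_+, \rho_-)$ with $\rho_\pm\colon G(C(2n,2p+1)) \to \mathrm{SU}(2) \subset \mathrm{SL}_2(\BC)$; each $\rho_\pm$ is then a nonabelian $\mathrm{SL}_2(\BC)$-representation of the knot group, so after conjugating it into the normal form \eqref{repn} it is governed by Propositions \ref{u}, \ref{R}, \ref{12} and \ref{fn} with parameter $y = y_\pm := \tr\rho_\pm(ab)$. First I would invoke Proposition \ref{fn} for $\rho_\pm$ to get
\[
f_n(y_\pm) = -\,\frac{\ell_\pm^{1/2} + \ell_\pm^{-1/2}}{\ell_\pm^{1/2} - \ell_\pm^{-1/2}} \cdot \frac{m + m^{-1}}{m - m^{-1}},
\]
where $m$ is the common meridian eigenvalue and $\ell_\pm = l_\pm\, m^{4n}$ with $l_\pm$ the longitude eigenvalue of $\rho_\pm$; as in Proposition \ref{trig}, the right-hand side is independent of the chosen branch of $\ell_\pm^{1/2}$.

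Next I would read off the peripheral eigenvalues from the normal form of $\rho_\alpha$: one has $m = e^{i\alpha/2}$ for both factors, $l_+ = e^{i\gamma}$ and $l_- = e^{i\phi}$. Using $\gamma = \tfrac12(\varphi_\alpha + l_\alpha)$, $\phi = \tfrac12(\varphi_\alpha - l_\alpha)$ and $m^{4n} = e^{2ni\alpha}$ this gives $\ell_\pm = e^{\,i(\varphi_\alpha \pm l_\alpha + 4n\alpha)/2}$, hence $\ell_\pm^{1/2} = e^{\,i(\varphi_\alpha \pm l_\alpha + 4n\alpha)/4}$. Substituting into the displayed identity and applying $\dfrac{e^{i\theta} + e^{-i\theta}}{e^{i\theta} - e^{-i\theta}} = -i\cot\theta$ with $\theta = \tfrac14(\varphi_\alpha \pm l_\alpha + 4n\alpha)$ and with $\theta = \tfrac{\alpha}{2}$, the two factors of $i$ cancel and I would obtain
\[
f_n(y_\pm) = \cot\!\left(\frac{\varphi_\alpha \pm l_\alpha + 4n\alpha}{4}\right)\cot\!\left(\frac{\alpha}{2}\right),
\]
which is the asserted identity. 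Since $\varphi_\alpha$, $l_\alpha$ and $\alpha$ are real, the right-hand side is real, which yields $f_n(y_\pm) \in \BR$.

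The trigonometric simplification and the branch-independence are essentially the same as in the hyperbolic case, so I expect the only genuinely new ingredient — and the step to be most careful about — to be the bookkeeping around the $\mathrm{SU}(2) \times \mathrm{SU}(2)$ structure. Concretely: one must check that each factor $\rho_\pm$ is nonabelian, so that the normal form \eqref{repn} and the relation $\ell_\pm = -\widetilde{W_{12}}/W_{12}$ of \cite{HS} — and hence Proposition \ref{fn} — genuinely apply to it; and one must confirm that, after this normalization, the longitude eigenvalues of the two factors are $e^{i\gamma}$ and $e^{i\phi}$ rather than their inverses, the latter ambiguity being exactly what the sign $\pm$ records. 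Once these points are settled, the identity follows from the direct substitution above.
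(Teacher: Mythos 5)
Your proposal is correct and follows essentially the same route as the paper: apply Proposition \ref{fn} with $m = e^{i\alpha/2}$ and $\ell = l m^{4n} = e^{i(\varphi_\alpha \pm l_\alpha + 4n\alpha)/2}$ (the two signs coming from the two $\mathrm{SU}(2)$ factors, via $\gamma = \tfrac12(\varphi_\alpha + l_\alpha)$, $\phi = \tfrac12(\varphi_\alpha - l_\alpha)$), and simplify using $\frac{e^{i\theta}+e^{-i\theta}}{e^{i\theta}-e^{-i\theta}} = -i\cot\theta$ to obtain the product of cotangents, whose realness gives $f_n(y_\pm) \in \BR$. Your extra bookkeeping about nonabelianness of each factor and the eigenvalue/sign conventions is a reasonable elaboration of what the paper leaves implicit, but it is not a different argument.
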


\begin{proof} 
Since $m = e^{i \alpha/2}$ and $\ell = lm^{4n}  = e^{i(\varphi_\alpha \pm l_\alpha + 4n\alpha)/2}$, we obtain
\begin{eqnarray*}
f_n(y_{\pm}) &=& - \frac{\ell^{1/2} +\ell^{-1/2}}{\ell^{1/2}-\ell^{-1/2}} \cdot \frac{m+m^{-1}}{m-m^{-1}} \\
&=& \cot \left( \frac{\varphi_\alpha \pm l_\alpha+  4n  \alpha}{4} \right) \cot \left( \frac{\alpha}{2} \right).
\end{eqnarray*} 
\end{proof}

\subsection{Proof of Theorems \ref{main-h} and \ref{main-s} for $C(2n,3)$}

Suppose $p=1$. By Propositions \ref{u} and \ref{R} we have $u = 2 + (y-2) (y+2-x^2) S^2_{n-1}(y)$ and 
\begin{eqnarray*}
\Phi_{C(2n,3)}(x,y) &=& (S_n(y) - S_{n-1}(y)) u - (S_n(y) - (y-1)S_{n-1}(y)) \\
&=&  (y-2) (y+2-x^2) S^2_{n-1}(y) (S_n(y) - S_{n-1}(y)) \\
&& + \, 2 (S_n(y) - S_{n-1}(y))  - (S_n(y) - (y-1)S_{n-1}(y)).
\end{eqnarray*} 

\begin{lemma} \label{cd}
Suppose $\Phi(x,y) =  b - a x^2$ where $a, b \in \BC(y)$. Let $A = \cot \frac{\alpha}{2}$. Then, for any $c \in  \BC(y)$, the equation $\Phi(2\cos \frac{\alpha}{2}, y)=0$ is equivalent to $c^2+ A^2= (1+A^2)d$ where $d = 1 + (c^2-1)(1 - \frac{b}{4a})$. 
\end{lemma}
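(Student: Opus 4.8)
The plan is to reduce the claim to a single algebraic identity in $\BC(y)$ that relates $c^2 + A^2 - (1+A^2)d$ to $\Phi(2\cos\frac{\alpha}{2}, y)$, and then read off the equivalence.

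First I would rewrite the vanishing of $\Phi$ at $x = 2\cos\frac{\alpha}{2}$. Since $\Phi(x,y) = b - ax^2$, we have $\Phi(2\cos\frac{\alpha}{2}, y) = b - 4a\cos^2\frac{\alpha}{2}$, and the half-angle identity $\cos^2\frac{\alpha}{2} = \frac{\cot^2(\alpha/2)}{1+\cot^2(\alpha/2)} = \frac{A^2}{1+A^2}$ turns this into
\[
\Phi\!\left(2\cos\tfrac{\alpha}{2},\, y\right) = \frac{b(1+A^2) - 4aA^2}{1+A^2}.
\]
Next I would substitute $d = 1 + (c^2-1)\bigl(1 - \tfrac{b}{4a}\bigr)$ into $c^2 + A^2 - (1+A^2)d$ and collect terms; a one-line computation, combined with the display above, gives
\[
c^2 + A^2 - (1+A^2)d = (c^2-1)\Bigl(\tfrac{b}{4a}(1+A^2) - A^2\Bigr) = \frac{(c^2-1)(1+A^2)}{4a}\,\Phi\!\left(2\cos\tfrac{\alpha}{2},\, y\right).
\]
Since $A \in \BR$ forces $1+A^2 \neq 0$ and $a$ is a nonzero element of $\BC(y)$, this identity shows that $c^2 + A^2 = (1+A^2)d$ holds exactly when $(c^2-1)\,\Phi(2\cos\frac{\alpha}{2}, y) = 0$; discarding the factor $c^2-1$ yields the asserted equivalence with $\Phi(2\cos\frac{\alpha}{2}, y) = 0$.

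I do not anticipate a genuine obstacle, since the whole content is one short symbolic manipulation, but two minor points deserve attention. First, one must know $a \neq 0$, both to write $\cos^2\frac{\alpha}{2} = \frac{b}{4a}$ and to divide by $4a$; this has to be checked in each concrete family, and for $C(2n,3)$ one reads off $a = (y-2)S^2_{n-1}(y)\bigl(S_n(y) - S_{n-1}(y)\bigr)$ from the expression for $\Phi_{C(2n,3)}(x,y)$ displayed just before the lemma. Second, the factor $c^2-1$ is spurious: strictly speaking the equivalence holds away from the zeros of $c^2-1$, and in the application $c = f_n(y)$ is evaluated at a root $y_0$ (resp. $y_\pm$) of $f_n^2(y) + A^2 = (1+A^2)g_n(y)$ for which $\mathrm{Im}(f_n(y_0)) > 0$ (resp. $f_n(y_\pm) \in \BR$) by Propositions \ref{trig} and \ref{trig-spherical}, so in the hyperbolic case in particular $c^2 - 1 \neq 0$ there and the extra factor can be dropped without loss.
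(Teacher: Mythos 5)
Your proposal is correct and is essentially the paper's own argument: the paper proves the lemma by the same half-angle substitution ($4-x^2 = \tfrac{4}{A^2+1}$, equivalently $\cos^2\tfrac{\alpha}{2} = \tfrac{A^2}{1+A^2}$) and the definition of $d$, organized as a chain of equivalences $\Phi(x,y)=0 \Leftrightarrow x^2=\tfrac{b}{a} \Leftrightarrow (A^2+1)(d-1)=c^2-1 \Leftrightarrow c^2+A^2=(1+A^2)d$, which is your single identity $c^2+A^2-(1+A^2)d = \tfrac{(c^2-1)(1+A^2)}{4a}\,\Phi$ read in reverse. The only difference is presentational: you make explicit the nonvanishing assumptions on $a$ and $c^2-1$ that the paper divides by silently, which is a reasonable refinement but not a different proof.
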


\begin{proof}
Let $x = 2\cos \frac{\alpha}{2}$. We have $4-x^2 = 4 \sin^2 \frac{\alpha}{2} = \frac{4}{A^2+1}$. Hence
\begin{eqnarray*}
\Phi(x, y)=0 &\Longleftrightarrow& x^2 =\frac{b}{a} \\
&\Longleftrightarrow& \frac{4}{A^2+1} = 4 - \frac{b}{a} = \frac{4(d-1)}{c^2 - 1} \\
&\Longleftrightarrow& (A^2 + 1) (d-1) = c^2 -1 \\
&\Longleftrightarrow& c^2+ A^2= (1+A^2)d. 
\end{eqnarray*} 
This proves the lemma.
\end{proof}

We write $\Phi_{C(2n,3)}(x,y) = b - a x^2$, where 
\begin{eqnarray*}
a &=& (y-2) S^2_{n-1}(y) (S_n(y) - S_{n-1}(y)), \\
b &=& S_n(y) + (y-3) S_{n-1}(y) + (y-2) (y+2) S^2_{n-1}(y) (S_n(y) - S_{n-1}(y)).
\end{eqnarray*} 
Note that $b - 4a = S_n(y) + (y-3) S_{n-1}(y) + (y-2)^2 S^2_{n-1}(y) (S_n(y) - S_{n-1}(y))$.

Choose $c = f_n(y)$. By Lemma \ref{cd}, the equation $\Phi_{C(2n,3)}(2\cos \frac{\alpha}{2}, y)=0$ is equivalent to $c^2+ A^2= (1+A^2)d$ where $d = 1 + (c^2-1)(1 - \frac{b}{4a})$. Since 
$$
c^2 -1 = \frac{4(S_n(y)-S_{n-1}(y)) (S_n(y) - (y-1) S_{n-1}(y))}{(y-2)^2 S^2_{n-1}(y)},
$$ 
we have
\begin{eqnarray*}
(c^2-1) \left( 1 - \frac{b}{4a} \right)
&=& -  \frac{S_n(y) - (y-1) S_{n-1}(y)}{(y-2)^3 S^4_{n-1}(y)} [ S_n(y) + (y-3) S_{n-1}(y) \\
&& \qquad \qquad \qquad \qquad + (y-2)^2 S^2_{n-1}(y) (S_n(y) - S_{n-1}(y)) ] \\
&=& -  \frac{1}{(y-2)^3 S^4_{n-1}(y)}  [ (S_n(y) - S_{n-1}(y))^2 - (y-2)^2 S^2_{n-1}(y) \\
&&  + \, (y-2)^2 S^2_{n-1}(y) \left( S^2_n(y) - y S_n(y) S_{n-1}(y) + (y-1) S^2_{n-1}(y) \right) ].
\end{eqnarray*}

By Lemma \ref{chev} we have $S^2_n(y) - y S_n(y) S_{n-1}(y) +  S^2_{n-1}(y) = 1$. This implies that $S^2_n(y) - y S_n(y) S_{n-1}(y) + (y-1) S^2_{n-1}(y) = 1 +  (y-2) S^2_{n-1}(y)$. Hence 
\begin{eqnarray*}
(c^2-1) \left( 1 - \frac{b}{4a} \right)
&=& -  \frac{(S_n(y) - S_{n-1}(y))^2 + (y-2)^3 S^4_{n-1}(y)}{(y-2)^3 S^4_{n-1}(y)}
\end{eqnarray*}
and  $d = 1 + (c^2-1) \left( 1 - \frac{b}{4a} \right) = -  \frac{(S_n(y) - S_{n-1}(y))^2}{(y-2)^3 S^4_{n-1}(y)}$. 

In summary, we have proved the following. 

\begin{proposition} \label{Riley-odd}
Let 
$$
f_n(y) = \frac{yS_{n-1}(y) - 2S_n(y)}{(y-2)S_{n-1}(y)}, \qquad g_n(y) = - \frac{(S_n(y) - S_{n-1}(y))^2}{(y-2)^3S^4_{n-1}(y)}.
$$
Let $x = 2\cos \frac{\alpha}{2}$ and $A = \cot \frac{\alpha}{2}$. Then the equation $\Phi_{C(2n, 3)}(x,y) = 0$ is equivalent to $f^2_n(y)+ A^2= (1+A^2)g_n(y)$.
\end{proposition}

For a two-bridge knot $K$, there exists an angle $\alpha_K \in [\frac{2\pi}{3},\pi)$ such that $K(\alpha)$ is hyperbolic for $\alpha \in [0, \alpha_K)$, Euclidean for $\alpha =\alpha_K$, and spherical for $\alpha \in (\alpha_K, 2\pi - \alpha_K)$. 
 
\subsubsection{Hyperbolic case} For $\alpha \in [0, \alpha_K)$, by the Schl\"{a}fli formula we have 
$$
\frac{d\mathrm{Vol}(K(\alpha))}{d\alpha}  = - \frac{1}{2} l_\alpha
$$
where $l_\alpha = \mathrm{Re}(\gamma_\alpha) > 0$ is the real length of $K \subset K(\alpha)$. Note that $K(\alpha)$ is Euclidean at $\alpha = \alpha_K$, so $\mathrm{Vol}(K(\alpha)) \to 0$ as $\alpha \to \alpha_K$. Let 
\begin{equation} \label{F}
F(\alpha)= i \int_{\overline{y_0}}^{y_0} \log \left( \frac{f^2_n(y)+ A^2}{(1+A^2)g_n(y)} \right) \frac{f'_n(y)dy}{f^2_n(y)-1}.
\end{equation} 
Then Theorem \ref{main-h} is equivalent to $\mathrm{Vol}(K(\alpha)) = F(\alpha)$. 

We first claim that $F(\alpha) \to 0$ as $\alpha \to \alpha_K$. Indeed, as $\alpha \to \alpha_K$ we have $l_\alpha \to 0$ and so $\gamma_\alpha = \ell_\alpha + i \varphi_\alpha \to i \varphi_{\alpha_K}$. Then, by the trigonometric identity (Proposition \ref{trig})  we obtain 
\begin{eqnarray*}
f_n(y_0) &=&  i \coth \left( \frac{\gamma_\alpha +  4n i  \alpha}{4} \right) \cot \left( \frac{\alpha}{2} \right) \\
&\to&   i \coth\left( \frac{i \varphi_{\alpha_K} +  4n i \alpha_K}{4} \right) \cot \left( \frac{\alpha_K}{2} \right) \\
&=&  \cot \left( \frac{\varphi_{\alpha_K} +  4n \alpha_K}{4} \right) \cot \left( \frac{\alpha_K}{2} \right),
\end{eqnarray*}
where we used $\coth (iz) = -i \cot(z)$.  Then $\mathrm{Im} \, f_n(y_0) \to 0$. Hence $f_n(\overline{y_0}) - f_n(y_0) = \overline{ f_n(y_0)} - f_n(y_0) = - 2 i \, \mathrm{Im} \, f_n(y_0) \to 0$. 

For $\alpha \in (\alpha_K - \ve, \alpha_K)$, with $\ve$ a sufficiently small positive real number, we let $s := f_n(y)$. Since $f_n(y)$ is a rational function $y$, we can write $y = h(s)$ for some continuous function $h(s)$ in a small open neighborhood of $f_n(y_0)$. Then, by changing variable we have
$$
F(\alpha)= i \int_{f_n(\overline{y_0})}^{f_n(y_0)} \log \left( \frac{s^2+ A^2}{(1+A^2) (g_n \circ h)(s)} \right) \frac{ds}{s^2-1}.
$$
As $\alpha \to \alpha_K$, since $f_n(\overline{y_0}) - f_n(y_0) \to 0$ we obtain $F(\alpha) \to 0$. 

Note that we also have $\mathrm{Vol}(K(\alpha)) \to 0$ as $\alpha \to \alpha_K$. Hence $\mathrm{Vol}(K(\alpha)) = F(\alpha)$  if we can show that 
$$
\frac{dF(\alpha)}{d\alpha} = \frac{d\mathrm{Vol}(K(\alpha))}{d\alpha} = - \frac{1}{2} l_\alpha.
$$. 

By taking derivative of \eqref{F} and noting that $dA/d\alpha=-(1+A^2)/2$, we have
\begin{eqnarray*}
 \frac{dF(\alpha)}{d\alpha}
&=&  \log \left( \frac{f^2_n(y_0)+ A^2}{(1+A^2)g_n(y_0)} \right) \frac{i f'_n(y_0)}{f^2_n(y_0)-1} \frac{dy_0}{d\alpha} - \log \left( \frac{f^2_n(\overline{y_0})+ A^2}{(1+A^2)g_n(\overline{y_0})} \right) \frac{i f'_n(\overline{y_0})}{f^2_n(\overline{y_0})-1} \frac{d\overline{y_0}}{d\alpha} 
 \\
&& + \, i \int_{\overline{y_0}}^{y_0} \frac{\partial}{\partial A} \left( \frac{f'_n(y)}{f^2_n(y)-1} \log \left( \frac{f^2_n(y)+ A^2}{(1+A^2)g_n(y)} \right) \right) \frac{dA}{d\alpha} \, dy
 \\
&=& i \int_{\overline{y_0}}^{y_0}\frac{f'_n(y)}{f^2_n(y)-1} \left( \frac{2A}{f^2_n(y)+ A^2} - \frac{2A}{1+ A^2} \right) \frac{-(1+A^2)}{2} dy \\
&=& i \int_{\overline{y_0}}^{y_0} \frac{f'_n(y) A}{f^2_n(y)+ A^2} dy \\
&=&i \left( \mathrm{arccot} \frac{f_n(\overline{y_0})}{A} - \mathrm{arccot} \frac{f_n(y_0)}{A}\right).
\end{eqnarray*}

Since
$
f_n(y_0) = i \coth \left( \frac{\gamma_\alpha +  4n i  \alpha}{4} \right) \cot \left( \frac{\alpha}{2} \right)
$ we have $\frac{f_n(y_0)}{A} = i \coth \left( \frac{\gamma_\alpha +  4n i  \alpha}{4} \right) = \cot \left( \frac{\gamma_\alpha +  4n i  \alpha}{4i} \right)$ and $\frac{f_n(\overline{y_0})}{A} =  \overline{\frac{f_n(y_0)}{A} } = \cot \left( \frac{\overline{\gamma_\alpha +  4n i  \alpha}}{-4i} \right)$. 
Hence 
\begin{eqnarray*}
\frac{dF(\alpha)}{d\alpha}  &=& i \left( \mathrm{arccot} \frac{f_n(\overline{y_0})}{A} - \mathrm{arccot} \frac{f_n(y_0)}{A}\right) \\
&=& i
 \left( \frac{\overline{\gamma_\alpha +  4n i  \alpha} }{-4i} - \frac{\gamma_\alpha +  4n i  \alpha}{4i} \right) \\
&=& -\frac{\overline{\gamma_\alpha} + \gamma_\alpha }{4} = - \frac{l_\alpha}{2}.
\end{eqnarray*}
This proves Theorem \ref{main-h} for $C(2n, 3)$ in the hyperbolic case.

\subsubsection{Spherical case} Equality \eqref{>pi} follows from Proposition \ref{pi}. For $\alpha \in (\alpha_K, \pi]$, by the Schl\"{a}fli formula we have 
$$
\frac{d\mathrm{Vol}(K(\alpha))}{d\alpha}  = \frac{1}{2} l_\alpha.
$$ 

Let 
\begin{equation} \label{G}
G(\alpha)= \int_{y_+}^{y_-} \log \left( \frac{f^2_n(y)+ A^2}{(1+A^2)g_n(y)} \right) \frac{f'_n(y)dy}{f^2_n(y)-1}.
\end{equation}
Then Theorem \ref{main-s} is equivalent to $\mathrm{Vol}(K(\alpha)) = G(\alpha)$. 

We first claim that $G(\alpha) \to 0$ as $\alpha \to \alpha_K$. Indeed,  as $\alpha \to \alpha_K$, we have $l_\alpha \to 0$ and 
$$
f_n(y_{\pm}) =  \cot \left( \frac{\varphi_\alpha \pm l_\alpha+  4n  \alpha}{4} \right) \cot \left( \frac{\alpha}{2} \right) \to  \cot \left( \frac{\varphi_{\alpha_K} +  4n \alpha_K}{4} \right) \cot \left( \frac{\alpha_K}{2} \right).
$$

For $\alpha \in (\alpha_K - \ve, \alpha_K)$, with $\ve$ a sufficiently small positive real number, we let $s := f_n(y)$. Since $f_n(y)$ is a rational function $y$, we can write $y = h(s)$ for some continuous function $h(s)$ in a small open neighborhood of $f_n(y_{\pm})$. Then, by changing variable we have
$$
G(\alpha)= \int_{f_n(y_+)}^{f_n(y_-)} \log \left( \frac{s^2+ A^2}{(1+A^2) (g_n \circ h)(s)} \right) \frac{ds}{s^2-1}.
$$
As $\alpha \to \alpha_K$, since $f_n(y_{\pm}) \to \cot \left( \frac{\varphi_{\alpha_K} +  4n \alpha_K}{4} \right) \cot \left( \frac{\alpha_K}{2} \right)$, we obtain $G(\alpha) \to 0$. Note that $\mathrm{Vol}(K(\alpha)) \to 0$ as $\alpha \to \alpha_K$. Hence $\mathrm{Vol}(K(\alpha)) = G(\alpha)$  if we can show that 
$$
\frac{dG(\alpha)}{d\alpha} = \frac{d\mathrm{Vol}(K(\alpha))}{d\alpha}  =  \frac{1}{2} l_\alpha.
$$ 

By taking derivative of \eqref{G} and noting that $dA/d\alpha=-(1+A^2)/2$, we have
\begin{eqnarray*}
 \frac{dG(\alpha)}{d\alpha}
&=&  \log \left( \frac{f^2_n(y_-)+ A^2}{(1+A^2)g_n(y_-)} \right) \frac{f'_n(y_-)}{f^2_n(y_-)-1} \frac{dy_-}{d\alpha} - \log \left( \frac{f^2_n(y_+)+ A^2}{(1+A^2)g_n(y_+)} \right) \frac{f'_n(y_+)}{f^2_n(y_+)-1} \frac{dy_+}{d\alpha} 
 \\
&& + \,  \int_{y_+}^{y_-} \frac{\partial}{\partial A} \left( \frac{f'_n(y)}{f^2_n(y)-1} \log \left( \frac{f^2_n(y)+ A^2}{(1+A^2)g_n(y)} \right) \right) \frac{dA}{d\alpha} \, dy
 \\
&=&  \int_{y_+}^{y_-}\frac{f'_n(y)}{f^2_n(y)-1} \left( \frac{2A}{f^2_n(y)+ A^2} - \frac{2A}{1+ A^2} \right) \frac{-(1+A^2)}{2} dy \\
&=&  \int_{y_+}^{y_-} \frac{f'_n(y) A}{f^2_n(y)+ A^2} dy \\
&=& \mathrm{arccot} \frac{f_n(y_+)}{A} -  \mathrm{arccot} \frac{f_n(y_-)}{A}.
\end{eqnarray*}

By the trigonometric identity (Proposition \ref{trig})  we have
$f_n(y_{\pm}) =  \cot \left( \frac{\varphi_\alpha \pm l_\alpha+  4n  \alpha}{4} \right) \cot \left( \frac{\alpha}{2} \right)$.  This implies that $\frac{f_n(y_{\pm})}{A} =  \cot \left( \frac{\varphi_\alpha \pm l_\alpha+  4n  \alpha}{4} \right)$. 
Hence 
\begin{eqnarray*}
\frac{dG(\alpha)}{d\alpha}  &=& - \mathrm{arccot} \frac{f_n(y_-)}{A} +  \mathrm{arccot} \frac{f_n(y_+)}{A}  \\
&=& - \frac{\varphi_\alpha - l_\alpha+  4n  \alpha}{4} +\frac{\varphi_\alpha + l_\alpha+  4n  \alpha}{4} \\
&=& \frac{l_\alpha}{2}.
\end{eqnarray*}
This proves Theorem \ref{main-s} for $C(2n, 3)$. 
\section{$C(2n, 2p)$}
\label{even}

\subsection{Knot group}

\begin{proposition}
We have $G(C(2n, 2p)) = \la a, b \mid \omega' a = b \omega' \ra$ where 
$$\omega' = [(a^{-1}b)^{n}(ab^{-1})^n]^p.$$
\end{proposition}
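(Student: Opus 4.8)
The plan is to mirror the Wirtinger computation used for Proposition \ref{group-odd}, adapting it to an \emph{even} number of half-twists in the second box. I would work with the standard diagram of $C(2n,2p)$ (the even-$k$ picture in Figure \ref{fig:genKnotDiagram}, with $k=2$): a left twist region carrying $2n$ half-twists of positive sign and a right twist region carrying $2p$ half-twists of the opposite sign, with Wirtinger generators $a_k,b_k$ running down the left region and $c_k,d_k$ running down the right region.

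First I would record the two inductions. The left region is identical to the one in Proposition \ref{group-odd}, giving $a_k = (b_0a_0)^{-k} a_0 (b_0a_0)^k$ and $b_k = (b_0a_0)^{-k} b_0 (b_0a_0)^k$, so that the region outputs $a_n,b_n$. The right region has the same crossing sign as the right region in Proposition \ref{group-odd}, hence the same form $c_k = (c_0^{-1}d_0)^{-k} c_0 (c_0^{-1}d_0)^k$ and $d_k = (c_0^{-1}d_0)^{-k} d_0 (c_0^{-1}d_0)^k$; the only structural change is that, since the second box now has an even number $2p$ of half-twists, the two strands leave it without interchanging. This alters the closure relations: in place of $a_0 = c_{p+1}$ one reads off a relation $a_0 = c_p$ (or $d_p$) together with gluing identities expressing $c_0,d_0$ as short words in $a=a_0$ and $b=b_0$ — the even-box analogue of $c_0 = b_n$, $d_0 = b_0$ from the odd case — to be extracted directly from the diagram.

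Next I would substitute and simplify. Using $b_n = (b_0a_0)^{-n} b_0 (b_0a_0)^n$ one rewrites the twist element $c_0^{-1}d_0$ of the right region as a short word in $a,b$; just as $b_n^{-1}b_0 = (ba)^{-n}(ab)^n$ appears in the odd case, here the even gluing should produce $(a^{-1}b)^n(ab^{-1})^n$ after the identification. Plugging this into the closure relation of the previous step and pushing all conjugating factors to one side turns it into $\omega' a_0 = b_0 \omega'$; because there is now no extra half-twist (contrast the surviving $(ab)^n$ prefactor in the odd case), the leftover prefactor coming from the left region cancels and one is left with exactly $\omega' = [(a^{-1}b)^n(ab^{-1})^n]^p$. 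Finally, since every Wirtinger generator is conjugate to $a$ or $b$ and every Wirtinger relation is a consequence of the single remaining one after these eliminations, the two-generator one-relator presentation $\langle a,b \mid \omega' a = b\omega'\rangle$ follows; a quick sanity check is $p=1$, $n=1$, where $\omega' = a^{-1}bab^{-1} = [a^{-1},b]$ recovers the figure-eight knot group $G(C(2,2))$.

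The main obstacle is bookkeeping rather than ideas: correctly reading the even-box closure relations off the diagram — which of $c_p,d_p$ equals $a_0$, and exactly how $c_0,d_0$ are expressed in $a,b$ — and then carrying out the word manipulation so that it collapses to precisely $[(a^{-1}b)^n(ab^{-1})^n]^p$ with no residual prefactor. A shortcut, already indicated in the text, is to observe that $C(2n,2p)=J(-2n,2p)$ and simply quote the double twist knot group presentation of Hoste--Shanahan \cite{HS}, reconciling sign conventions.
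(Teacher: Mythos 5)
Your proposal matches the paper: its actual proof is exactly your shortcut, namely observing that $C(2n,2p)$ is the double twist knot $J(-2n,2p)$ and quoting the Hoste--Shanahan presentation, while it only remarks (without carrying out the details) that a Wirtinger computation parallel to the odd case, as in your sketch, would also work. So the approach is essentially the same, and the bookkeeping you leave unfinished is no more than what the paper itself omits.
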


\begin{figure}[h]
		\centering
		\includegraphics[scale=1]{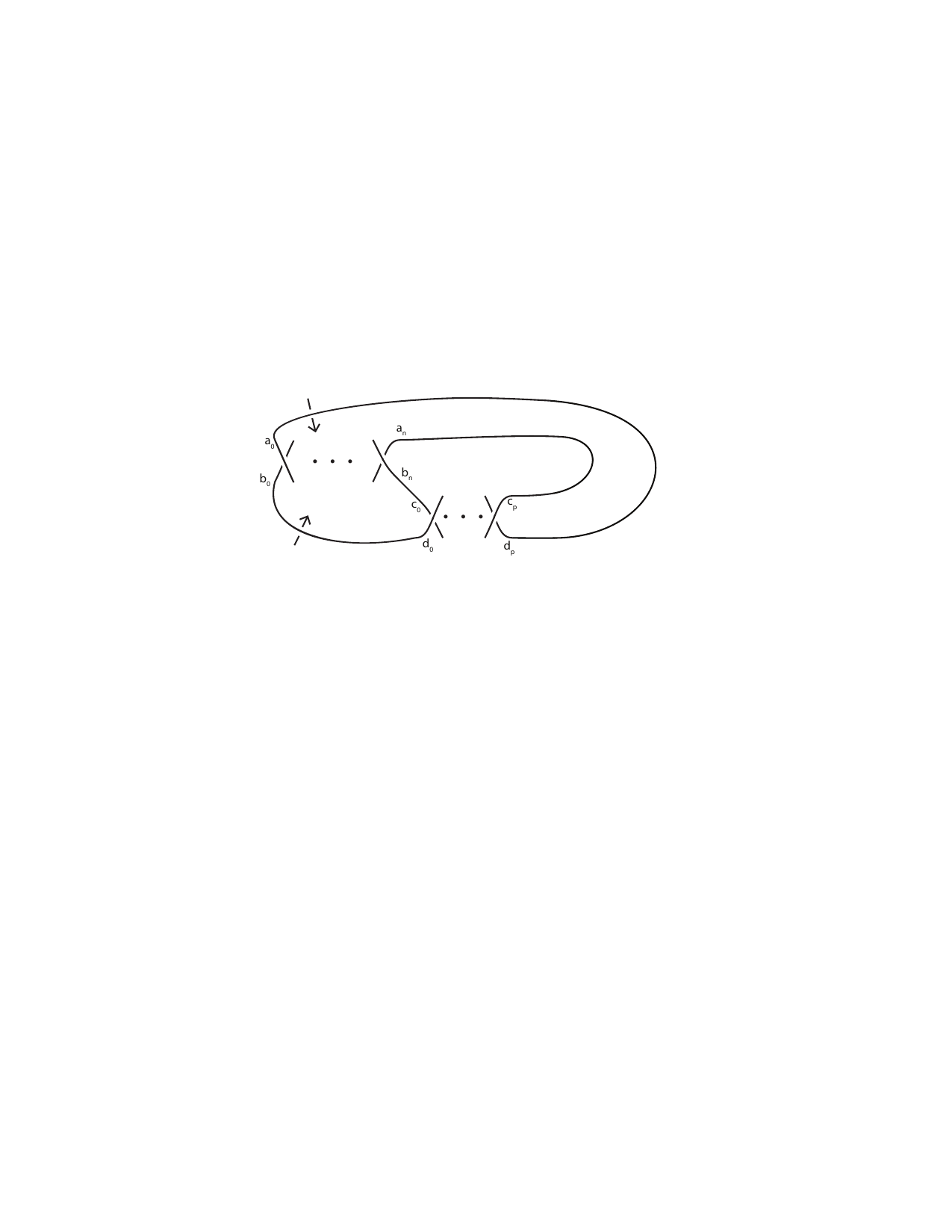}
		\caption{$C(2n,2p)$}
\end{figure}

\begin{proof}
The proof is similar to that of  Proposition \ref{group-odd} for $C(2n, 2p + 1)$, with
appropriate modifications for the even case. Starting from the knot diagram and using the Wirtinger presentation, we trace through the crossings to obtain the stated relation.
\end{proof}

Note that $C(2n, 2p)$ is the double twist knot $J(-2n, 2p)$, so a similar presentation for its knot group can also be found in \cite{HS, MPL}.

\subsection{$\mathrm{SL}_2(\BC)$-representations}

Suppose $\rho\colon G(C(2n, 2p))\to  \mathrm{SL}_2(\BC)$ is a nonabelian representation. 
Up to conjugation, we may assume that 
$$
A:=\rho(a) = \left[
\begin{array}{cc}
m & 1\\
0 & m^{-1}
\end{array}
\right] \quad \text{and} \quad B:=\rho(b) = \left[
\begin{array}{cc}
m & 0\\
2-z & m^{-1}
\end{array}
\right]
$$
where $(m,z) \in \BC^2$ satisfies $\rho(\omega' a) = \rho(b \omega')$. Note that $z=\tr \rho(ab^{-1})$. 

The following propositions are proved in \cite{MPL, MT}. 

\begin{proposition} \label{v}
	Let  $V = \rho((a^{-1} b)^{n} (ab^{-1})^{n})$ and $v = \tr V$. Then
	\[
		v = 2 + (z-2) (z+2-x^2) S^2_{n-1}(z).
	\]
\end{proposition}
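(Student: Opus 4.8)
The plan is to mirror the proof of Proposition \ref{u} almost verbatim, since the two situations are structurally identical. Recall that in the odd case we had $A$ and $B$ with $\tr A = \tr B = x$ and $\tr AB = y$, and we computed $\tr\rho((a^{-1}b^{-1})^n(ab)^n)$. Here we instead have the matrix $B$ of the form given above, with $\tr AB^{-1} = z$, and we want $v = \tr\rho((a^{-1}b)^n(ab^{-1})^n)$. The first step is to observe that $\tr A^{-1}B = \tr AB^{-1} = z$ (both equal $\tr$ of an element conjugate to its inverse-pattern, or simply compute directly from the matrix forms), so that Lemma \ref{power} applies with the same scalar $z$ to both factors.

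Next I would write, using Lemma \ref{power},
$$
V = (A^{-1}B)^n (AB^{-1})^n = \bigl(S_n(z)I - S_{n-1}(z)B^{-1}A\bigr)\bigl(S_n(z)I - S_{n-1}(z)BA^{-1}\bigr),
$$
expand, and take the trace to get
$$
v = 2S_n^2(z) + \bigl(\tr B^{-1}A B A^{-1}\bigr)S_{n-1}^2(z) - 2z\,S_n(z)S_{n-1}(z).
$$
Then Lemma \ref{chev} (with $z$ in place of $y$) gives $S_n^2(z) - zS_n(z)S_{n-1}(z) + S_{n-1}^2(z) = 1$, which collapses this to
$$
v = 2 + \bigl(\tr B^{-1}ABA^{-1} - 2\bigr)S_{n-1}^2(z).
$$
The final step is the direct matrix computation: plugging in the explicit forms of $A$ and $B$, one checks that $\tr B^{-1}ABA^{-1} - 2 = (z-2)(z+2-x^2)$, where $x = m + m^{-1}$. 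Substituting this in yields $v = 2 + (z-2)(z+2-x^2)S_{n-1}^2(z)$, as claimed.

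Since this proposition is attributed to \cite{MPL, MT}, I would keep the write-up brief, and in fact could simply cite those references. The only point requiring a small amount of care — the "main obstacle," such as it is — is bookkeeping with the matrices: getting the orders right in the products $(A^{-1}B)^n(AB^{-1})^n$ versus what appears inside the commutator $B^{-1}ABA^{-1}$, and confirming that the relevant traces ($\tr A^{-1}B$, $\tr AB^{-1}$, and $\tr B^{-1}ABA^{-1}$) come out to the values that make the Chebyshev identity apply cleanly. Everything else is a routine $2\times 2$ determinant-one computation identical in spirit to Proposition \ref{u}, so no genuine difficulty is expected.
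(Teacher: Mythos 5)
Your proposal is correct and is essentially the paper's approach: the paper itself only cites \cite{MPL, MT} for this statement, and your argument is the verbatim adaptation of the in-paper proof of Proposition \ref{u}, with the key facts checking out exactly as you anticipate ($\tr A^{-1}B = \tr AB^{-1} = z$, the inverses $(A^{-1}B)^{-1} = B^{-1}A$ and $(AB^{-1})^{-1} = BA^{-1}$ giving the middle term $-2z S_n(z)S_{n-1}(z)$, and the direct computation $\tr B^{-1}ABA^{-1} - 2 = (z-2)(z+2-x^2)$ with $x = m+m^{-1}$). Nothing further is needed.
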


\begin{proposition} \label{R'}
We have 
$$
\rho(\omega' a)  -  \rho(b \omega') =  
\left[
\begin{array}{cc}
0 & \Phi_{C(2n, 2p)}(x,z)\\
(z-2) \Phi_{C(2n, 2p)}(x,z) & 0
\end{array}
\right]
$$
where 
$$ 
\Phi_{C(2n, 2p)}(x,z) :=  \left[ 1 + (z+2-x^2) S_{n-1}(z) (S_{n}(z)  - S_{n-1}(z)) \right] S_{p-1}(v) - S_{p-2}(v).
$$
\end{proposition}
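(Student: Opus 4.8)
The plan is to mirror the proof of Proposition \ref{R}. Write $W' := \rho(\omega') = V^p$, where $V = (A^{-1}B)^{n}(AB^{-1})^{n}$ is the matrix of Proposition \ref{v}, with $\tr V = v$. Using Lemma \ref{power} in the form $M^p = S_{p-1}(z)M - S_{p-2}(z)I$ (multiply the stated identity $M^{p-1} = S_{p-1}(z)I - S_{p-2}(z)M^{-1}$ by $M$), we obtain
$$
\rho(\omega' a) - \rho(b\omega') = W'A - BW' = S_{p-1}(v)\,(VA - BV) - S_{p-2}(v)\,(A - B).
$$
From the matrix forms of $A$ and $B$ one reads off $A-B = \left[\begin{array}{cc} 0 & 1 \\ z-2 & 0 \end{array}\right]$, which already has the claimed off-diagonal shape and accounts for the $-S_{p-2}(v)$ term. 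So the statement reduces to showing
$$
VA - BV = \left[\begin{array}{cc} 0 & \psi \\ (z-2)\psi & 0 \end{array}\right], \qquad \psi := 1 + (z+2-x^2)\,S_{n-1}(z)\bigl(S_n(z) - S_{n-1}(z)\bigr).
$$

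To establish this, expand $V$ by applying Lemma \ref{power} to each of its two factors (using $\tr(A^{-1}B) = \tr(AB^{-1}) = z$):
$$
V = \bigl(S_n(z)I - S_{n-1}(z)B^{-1}A\bigr)\bigl(S_n(z)I - S_{n-1}(z)BA^{-1}\bigr) = S_n^2 I - S_nS_{n-1}(B^{-1}A + BA^{-1}) + S_{n-1}^2\, B^{-1}A\,BA^{-1},
$$
whence
$$
VA - BV = S_n^2\,(A-B) - S_nS_{n-1}\bigl(B^{-1}A^2 + B - A - B^2A^{-1}\bigr) + S_{n-1}^2\bigl(B^{-1}AB - ABA^{-1}\bigr).
$$
Now perform three short direct matrix computations: writing $x^2-2 = m^2+m^{-2}$, one checks that $A-B$, $B^{-1}A^2+B-A-B^2A^{-1}$, and $B^{-1}AB-ABA^{-1}$ equal $E$, $(x^2-2)E$, and $(x^2-1-z)E$ respectively, where $E := \left[\begin{array}{cc} 0 & 1 \\ z-2 & 0 \end{array}\right]$; these are the analogues of the three off-diagonal matrix identities used in the proof of Proposition \ref{R}. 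Substituting gives $VA - BV = \bigl(S_n^2 - (x^2-2)S_nS_{n-1} + (x^2-1-z)S_{n-1}^2\bigr)E$, and by the Chebyshev identity $S_n^2 - zS_nS_{n-1} + S_{n-1}^2 = 1$ of Lemma \ref{chev} the scalar collapses to $1 + (z+2-x^2)S_{n-1}(S_n - S_{n-1}) = \psi$. Combined with the first display this yields $\Phi_{C(2n,2p)}(x,z) = \psi\,S_{p-1}(v) - S_{p-2}(v)$, exactly as asserted.

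The work is entirely bookkeeping, with no conceptual obstacle beyond the odd case. The two points needing care are: choosing the shifted form of Lemma \ref{power} so that $S_{p-1}(v)$ and $S_{p-2}(v)$ (rather than $S_p(v), S_{p-1}(v)$) appear, and verifying the three $2\times2$ matrix identities and the final Chebyshev simplification with correct signs. The main ``hard part'' is thus simply checking that $S_n^2 - (x^2-2)S_nS_{n-1} + (x^2-1-z)S_{n-1}^2$ telescopes to $\psi$.
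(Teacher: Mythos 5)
Your proposal is correct. Note that the paper does not actually prove Proposition \ref{R'}; it simply cites \cite{MPL, MT}, so your argument supplies a self-contained proof that the paper omits, and it does so by the natural route: transplanting the proof of Proposition \ref{R} to the even case. I checked the key steps: the shifted form $M^{p}=S_{p-1}(v)M-S_{p-2}(v)I$ of Lemma \ref{power} is legitimate (apply the lemma with $k=p-1$ and multiply by $M$), and with $\tr(A^{-1}B)=\tr(AB^{-1})=z$ the expansion $VA-BV=S_n^2(A-B)-S_nS_{n-1}\bigl(B^{-1}A^2+B-A-B^2A^{-1}\bigr)+S_{n-1}^2\bigl(B^{-1}AB-ABA^{-1}\bigr)$ is right. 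The three $2\times2$ identities hold exactly as you state: with $E=\left[\begin{smallmatrix}0&1\\ z-2&0\end{smallmatrix}\right]$ one gets $A-B=E$, $B^{-1}A^2+B-A-B^2A^{-1}=(x^2-2)E$, and $B^{-1}AB-ABA^{-1}=(x^2-1-z)E$ (I verified these from the explicit matrices with $B_{21}=2-z$). Finally, $S_n^2-(x^2-2)S_nS_{n-1}+(x^2-1-z)S_{n-1}^2=1+(z+2-x^2)S_{n-1}(S_n-S_{n-1})$ follows from Lemma \ref{chev} exactly as you say, giving $W'A-BW'=\bigl[\psi S_{p-1}(v)-S_{p-2}(v)\bigr]E$, which is the claimed formula. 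The only presentational quibble is that you should state explicitly that $\tr(A^{-1}B)=\tr\bigl((AB^{-1})^{-1}\bigr)=z$ before invoking Lemma \ref{power} on the factor $(A^{-1}B)^n$; otherwise the argument is complete.
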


\subsection{Longitude and trignometric identity}

If we choose the meridian to be $\mu=b$ then the canonical longitude is $\lambda = \omega' (\omega')^*$, where $(\omega')^*$ is the word obtained from $\omega'$ by writing the letters in $\omega'$ in reversed order. Note that since $\omega' a = b \omega'$ and $(\omega')^* b = a (\omega')^*$ we have $\omega' (\omega')^* b = \omega' a (\omega')^* = b \omega' (\omega')^*$. 

Since $\rho(\mu) = \left[
\begin{array}{cc}
m & 0\\
* & m^{-1}
\end{array}
\right]$ we have $\rho(\lambda) = \left[
\begin{array}{cc}
l & 0\\
* & l^{-1}
\end{array}
\right]$, where $m = e^{i \alpha/2}$ and $l = e^{\gamma_\alpha/2}$. By a similar calculation as in \cite[Equation (2.3)]{HS} we have $l = - W'_{12} \big/ \widetilde{W'_{12}}$, where $W'_{12}$ is the $(1,2)$-entry of $W'=\rho(\omega')$ and $\widetilde{W'_{12}}$ is obtained from $W'_{12}$ by replacing $m$ by $m^{-1}$. Note that $W'_{12}$ is a function in $m$ and $z$. 

Similar to Propositions \ref{12} and \ref{fn} we have the following propositions. 

\begin{proposition} \label{12'}
We have 
$$
W'_{12} =  \left( m (S_{n}(z) - S_{n-1}(z)) - m^{-1} (S_{n-1}(z) - S_{n-2}(z) \right) S_{n-1}(z) S_{p-1}(v). 
$$
\end{proposition}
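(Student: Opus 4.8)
The plan is to follow the same scheme as the proof of Proposition \ref{12}, but the even case turns out to be a little cleaner. First I would write $W'=\rho(\omega')=V^{p}$, where $V=\rho\bigl((a^{-1}b)^{n}(ab^{-1})^{n}\bigr)$ and $v=\tr V$. Multiplying the identity of Lemma \ref{power} on the left by $M$ and relabelling gives the companion form $M^{k}=S_{k-1}(\tr M)\,M-S_{k-2}(\tr M)\,I$; applying it with $M=V$, $k=p$ yields $W'=S_{p-1}(v)\,V-S_{p-2}(v)\,I$. Since $I_{12}=0$, taking the $(1,2)$-entry immediately gives $W'_{12}=S_{p-1}(v)\,V_{12}$, so the whole statement reduces to computing the single entry $V_{12}$. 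Note that, unlike in the proof of Proposition \ref{12}, no appeal to the relation $\Phi_{C(2n,2p)}(x,z)=0$ is needed here, which is why the even case is simpler at this point.

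Next I would compute $V_{12}$. First check by a direct $2\times2$ multiplication with the matrix forms of $A$ and $B$ that $\tr(A^{-1}B)=\tr(AB^{-1})=z$, so Lemma \ref{power} applies with the same argument $z$ to both factors:
$$
(A^{-1}B)^{n}=S_n(z)I-S_{n-1}(z)B^{-1}A,\qquad (AB^{-1})^{n}=S_n(z)I-S_{n-1}(z)BA^{-1}.
$$
Multiplying these,
$$
V=S_n^{2}(z)\,I-S_n(z)S_{n-1}(z)\,(B^{-1}A+BA^{-1})+S_{n-1}^{2}(z)\,B^{-1}ABA^{-1}.
$$
A short computation with the explicit matrices gives $(B^{-1}A)_{12}=m^{-1}$, $(BA^{-1})_{12}=-m$, and $(B^{-1}ABA^{-1})_{12}=m^{-1}(z-1)-m$ (which I would obtain by forming $(B^{-1}A)(BA^{-1})$ from the two matrices just computed). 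Hence
\begin{align*}
V_{12}&=-S_n(z)S_{n-1}(z)(m^{-1}-m)+S_{n-1}^{2}(z)\bigl(m^{-1}(z-1)-m\bigr)\\
&=S_{n-1}(z)\bigl[\,m(S_n(z)-S_{n-1}(z))-m^{-1}(S_n(z)-(z-1)S_{n-1}(z))\,\bigr].
\end{align*}

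Finally I would use the Chebyshev recursion $S_n(z)=zS_{n-1}(z)-S_{n-2}(z)$ to rewrite $S_n(z)-(z-1)S_{n-1}(z)=S_{n-1}(z)-S_{n-2}(z)$; substituting this into the bracket and combining with $W'_{12}=S_{p-1}(v)\,V_{12}$ gives exactly the asserted formula. The argument is entirely routine; the only points that need care are verifying that both $A^{-1}B$ and $AB^{-1}$ have trace precisely $z$ (so that Lemma \ref{power} can be applied uniformly) and the bookkeeping in the product $B^{-1}ABA^{-1}$. I do not anticipate any genuine obstacle.
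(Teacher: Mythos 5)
Your proof is correct and follows essentially the same Chebyshev-expansion technique that the paper uses for Proposition \ref{12} (the paper only remarks that the even case is ``similar''): the entries $(B^{-1}A)_{12}=m^{-1}$, $(BA^{-1})_{12}=-m$, $(B^{-1}ABA^{-1})_{12}=m^{-1}(z-1)-m$ and the reduction $S_n(z)-(z-1)S_{n-1}(z)=S_{n-1}(z)-S_{n-2}(z)$ all check out, as does the companion form $V^{p}=S_{p-1}(v)V-S_{p-2}(v)I$. Your observation that, unlike in Proposition \ref{12}, no appeal to $\Phi_{C(2n,2p)}(x,z)=0$ is needed is also accurate, since the stated formula holds identically in $(m,z)$.
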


\begin{proposition}\label{trig'}
Let $f_n(z) = \frac{zS_{n-1}(z) - 2S_n(z)}{(z-2)S_{n-1}(z)}$. Then if $\Phi_{C(2n, 2p)}(x,z) =0$, where $x=m+m^{-1}$, we have  
$$
 f_n(z) = - \frac{l^{1/2} +l^{-1/2}}{l^{1/2}-l^{-1/2}} \cdot \frac{m+m^{-1}}{m-m^{-1}}.
$$ 
\end{proposition}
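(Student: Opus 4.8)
The plan is to follow the proof of Proposition \ref{fn} almost verbatim, substituting the expression for $W'_{12}$ from Proposition \ref{12'} for the one used there. One simplification in the even case is that the canonical longitude is $\lambda = \omega'(\omega')^*$ with no compensating power of the meridian, so $l = -\widetilde{W'_{12}}/W'_{12}$ holds directly and there is no $m^{4n}$-twist to carry around.

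First I would set $P := S_n(z) - S_{n-1}(z)$ and $Q := S_{n-1}(z) - S_{n-2}(z)$, so that Proposition \ref{12'} reads
$$
W'_{12} = \left( mP - m^{-1}Q \right) S_{n-1}(z)\, S_{p-1}(v).
$$
Replacing $m$ by $m^{-1}$ gives $\widetilde{W'_{12}} = \left( m^{-1}P - mQ \right) S_{n-1}(z)\, S_{p-1}(v)$, and the common factor $S_{n-1}(z)\, S_{p-1}(v)$ cancels in the quotient, leaving
$$
l = -\,\frac{m^{-1}P - mQ}{mP - m^{-1}Q}.
$$

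Next, writing $l+1$ and $l-1$ over the common denominator $mP - m^{-1}Q$, a one-line computation gives $l+1 = \frac{(m-m^{-1})(P+Q)}{mP - m^{-1}Q}$ and $l-1 = \frac{(m+m^{-1})(Q-P)}{mP - m^{-1}Q}$, hence
$$
- \frac{l^{1/2}+l^{-1/2}}{l^{1/2}-l^{-1/2}} \cdot \frac{m+m^{-1}}{m-m^{-1}} = -\,\frac{l+1}{l-1} \cdot \frac{m+m^{-1}}{m-m^{-1}} = \frac{P+Q}{P-Q},
$$
where all the $m$-dependence cancels. Finally I would use only the Chebyshev recursion $S_n(z) + S_{n-2}(z) = z S_{n-1}(z)$: it yields $P+Q = S_n(z) - S_{n-2}(z) = 2S_n(z) - z S_{n-1}(z)$ and $P - Q = S_n(z) - 2S_{n-1}(z) + S_{n-2}(z) = (z-2)S_{n-1}(z)$, so that $\frac{P+Q}{P-Q} = f_n(z)$, completing the proof. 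There is no genuine obstacle here; the only thing to watch is the bookkeeping of the $m^{\pm 1}$ factors and invoking the recursion in the correct direction, exactly as in Proposition \ref{fn}.
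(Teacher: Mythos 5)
Your proof is correct and follows essentially the same route as the paper, which establishes Proposition \ref{trig'} by repeating the argument of Propositions \ref{12} and \ref{fn} with $W'_{12}$ in place of $W_{12}$ (the paper only differs cosmetically, writing $r = P/Q$ and $\ell = \frac{m^2-r}{m^2r-1}$ instead of keeping $P$ and $Q$ separate). Your observations that the factor $S_{n-1}(z)S_{p-1}(v)$ is invariant under $m \mapsto m^{-1}$ and hence cancels, and that no $m^{4n}$ correction is needed here, are exactly the right points to check.
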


\subsection{Proof of Theorems \ref{main-h} and \ref{main-s} for $C(2n,2)$}

Suppose $p=1$. By Propositions \ref{v} and \ref{R'} we have  $v = 2 + (z-2) (z+2-x^2) S^2_{n-1}(z)$ and 
\begin{eqnarray*}
\Phi_{C(2n, 2)}(x,z) &=& 1 + (z+2-x^2) S_{n-1}(z) (S_{n}(z)  - S_{n-1}(z)).
\end{eqnarray*} 

We write $\Phi_{C(2n,2)}(x,z) = b - a x^2$, where 
\begin{eqnarray*}
a &=& S_{n-1}(z) (S_{n}(z)  - S_{n-1}(z)), \\
b &=& 1 + (z+2) S_{n-1}(z) (S_{n}(z)  - S_{n-1}(z)).
\end{eqnarray*} 
Since $S^2_n(z) - z S_n(z) S_{n-1}(z) +  S^2_{n-1}(z) = 1$ we have
\begin{eqnarray*}
b - 4a &=& 1 + (z-2) S_{n-1}(z) (S_{n}(z)  - S_{n-1}(z)) \\
&=& S^2_{n}(z)  - 2 S_{n}(z) S_{n-1}(z) + (3-z)S^2_{n-1}(z).
\end{eqnarray*} 

Choose $c = f_n(z)$. By Lemma \ref{cd}, the equation $\Phi_{C(2n, 2)}(2\cos \frac{\alpha}{2}, z)=0$ is equivalent to $c^2+ A^2= (1+A^2)d$ where $d = 1 + (c^2-1)(1 - \frac{b}{4a})$. Since 
$$
c^2 -1 = \frac{4(S_n(z)-S_{n-1}(z)) (S_n(z) - (z-1) S_{n-1}(z))}{(z-2)^2 S^2_{n-1}(z)},
$$ 
by a direct calculation we have
\begin{eqnarray*}
d &=& 1 + (c^2-1) \left( 1 - \frac{b}{4a} \right) \\
&=& 1 -  \frac{\left( S_n(z) - (z-1) S_{n-1}(z) \right) \left( S^2_{n}(z)  - 2 S_{n}(z) S_{n-1}(z) + (3-z)S^2_{n-1}(z) \right)}{(z-2)^2 S^3_{n-1}(z)} \\
&=& - \frac{\left( S_n(z) -S_{n-1}(z) \right)\left( S^2_n(z) + S^2_{n-1}(z) - z S_n(z) S_{n-1}(z) \right)}{(z-2)^2 S^3_{n-1}(z)} \\
&=& - \frac{ S_n(z) -S_{n-1}(z)}{(z-2)^2 S^3_{n-1}(z)}.
\end{eqnarray*} 

Hence we have proved the following. 

\begin{proposition} \label{Riley}
Let 
$$
f_n(z) = \frac{zS_{n-1}(z) - 2S_n(z)}{(z-2)S_{n-1}(z)}, \qquad g_n(z) = - \frac{ S_n(z) -S_{n-1}(z)}{(z-2)^2 S^3_{n-1}(z)}.
$$
Let $x = 2\cos \frac{\alpha}{2}$ and $A = \cot \frac{\alpha}{2}$. Then the equation $\Phi_{C(2n, 2)}(x,z) = 0$ is equivalent to $f^2_n(z)+ A^2= (1+A^2)g_n(z)$.
\end{proposition}

By using the trigonometric identity (Proposition \ref{trig'}) and Proposition \ref{Riley}, the proof of Theorems \ref{main-h} and \ref{main-s} for $C(2n,2)$  is similar to that for $C(2n,3)$. 

\subsection{Proof of Theorems \ref{main-h} and \ref{main-s} for $C(2n, -2n)$} Suppose $p=-2n$. Note that $C(2n, -2n)$ is the mirror image of the double twist knot $J(2n, 2n)$ in \cite{HS}. By \cite{MPL} the component of $\Phi_{C(2n, -2n)}(x,z)$ containing  the holonomy representation is a factor of $v - z = (z-2) \left( -1 + (z+2-x^2) S^2_{n-1}(z) \right)$. The factor $z-2$ corresponds to reducible representations, hence the factor 
$$\Phi^{\mathrm{hol}}_{C(2n, -2n)}(x,z) := -1 + (z+2-x^2) S^2_{n-1}(z)$$ determines the component containing the holonomy representation.

We write $\Phi^{\mathrm{hol}}_{C(2n, -2n)}(x,z) = b - a x^2$, where $a = S^2_{n-1}(z)$ and $ b = -1 + (z+2) S^2_{n-1}(z)$. Since $S^2_n(z) - z S_n(z) S_{n-1}(z) +  S^2_{n-1}(z) = 1$ we have
$$
b - 4a = - 1 +(z-2) S^2_{n-1}(z)
= - S^2_{n}(z)  +  z S_{n}(z) S_{n-1}(z) + (z-3)S^2_{n-1}(z).
$$

Choose $c = f_n(z)$. By Lemma \ref{cd}, the equation $\Phi^{\mathrm{hol}}_{C(2n, -2n)}(2\cos \frac{\alpha}{2}, z)=0$ is equivalent to $c^2+ A^2= (1+A^2)d$ where $d = 1 + (c^2-1)(1 - \frac{b}{4a})$. Since 
$$
c^2 -1 = \frac{4(S_n(z)-S_{n-1}(z)) (S_n(z) - (z-1) S_{n-1}(z))}{(z-2)^2 S^2_{n-1}(z)},
$$ 
by a direct calculation we have
\begin{eqnarray*}
d &=& 1 + (c^2-1) \left( 1 - \frac{b}{4a} \right) \\
&=& 1 -  \frac{(S_n(z)-S_{n-1}(z)) (S_n(z) - (z-1) S_{n-1}(z)) }{(z-2)^2 S^4_{n-1}(z)} \\
&& \qquad \times \left( - S^2_{n}(z)  +  z S_{n}(z) S_{n-1}(z) + (z-3)S_{n-1}(z) \right) \\
&=& \frac{\left( S^2_n(z) + S^2_{n-1}(z) - z S_n(z) S_{n-1}(z) \right)^2}{(z-2)^2 S^4_{n-1}(z)} \\
&=& \frac{1}{(z-2)^2 S^4_{n-1}(z)}.
\end{eqnarray*} 

Hence we have proved the following. 

\begin{proposition} \label{Riley'}
Let 
$$
f_n(z) = \frac{zS_{n-1}(z) - 2S_n(z)}{(z-2)S_{n-1}(z)}, \qquad g_n(z) = \frac{1}{(z-2)^2S^4_{n-1}(z)}.
$$
Let $x = 2\cos \frac{\alpha}{2}$ and $A = \cot \frac{\alpha}{2}$. Then the equation $\Phi^{\mathrm{hol}}_{C(2n, -2n)}(x,z) = 0$ is equivalent to $f^2_n(z)+ A^2= (1+A^2)g_n(z)$.
\end{proposition}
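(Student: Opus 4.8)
The plan is to deduce the claimed equivalence from Lemma~\ref{cd}, exactly as was done for $C(2n,3)$ and $C(2n,2)$. Writing $\Phi^{\mathrm{hol}}_{C(2n,-2n)}(x,z)=b-ax^2$ with $a=S_{n-1}^2(z)$ and $b=-1+(z+2)S_{n-1}^2(z)$, Lemma~\ref{cd} says that, for any $c\in\BC(z)$, the equation $\Phi^{\mathrm{hol}}_{C(2n,-2n)}(2\cos\frac\alpha2,z)=0$ is equivalent to $c^2+A^2=(1+A^2)d$ with $d=1+(c^2-1)\bigl(1-\tfrac{b}{4a}\bigr)$. Taking $c=f_n(z)$, the whole statement collapses to the single algebraic identity $d=g_n(z)=\frac{1}{(z-2)^2S_{n-1}^4(z)}$, so the proof is a bookkeeping computation of $d$.

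I would compute the two ingredients of $d$ separately. First, $4a-b=1+(2-z)S_{n-1}^2$; substituting $1=S_n^2-zS_nS_{n-1}+S_{n-1}^2$ from Lemma~\ref{chev} turns this into $4a-b=S_n^2-zS_nS_{n-1}-(z-3)S_{n-1}^2$, hence $1-\tfrac{b}{4a}=\frac{S_n^2-zS_nS_{n-1}-(z-3)S_{n-1}^2}{4S_{n-1}^2}$. Second, from the explicit form of $f_n$ one gets $(z-2)^2S_{n-1}^2(f_n^2-1)=(2S_n-zS_{n-1})^2-(z-2)^2S_{n-1}^2$, and factoring this difference of squares as $\bigl(2S_n-(2z-2)S_{n-1}\bigr)\bigl(2S_n-2S_{n-1}\bigr)$ gives $c^2-1=\frac{4(S_n-S_{n-1})(S_n-(z-1)S_{n-1})}{(z-2)^2S_{n-1}^2}$.

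Multiplying, $(c^2-1)\bigl(1-\tfrac{b}{4a}\bigr)=\frac{(S_n-S_{n-1})(S_n-(z-1)S_{n-1})\bigl(S_n^2-zS_nS_{n-1}-(z-3)S_{n-1}^2\bigr)}{(z-2)^2S_{n-1}^4}$, so $d$ becomes a single fraction over $(z-2)^2S_{n-1}^4$ with numerator $N=(z-2)^2S_{n-1}^4+(S_n-S_{n-1})(S_n-(z-1)S_{n-1})\bigl(S_n^2-zS_nS_{n-1}-(z-3)S_{n-1}^2\bigr)$. The remaining step is to show $N=\bigl(S_n^2+S_{n-1}^2-zS_nS_{n-1}\bigr)^2$, which equals $1$ by Lemma~\ref{chev}; then $d=\frac{1}{(z-2)^2S_{n-1}^4}=g_n(z)$ and the proposition follows.

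The only genuinely computational point — and the place to be careful — is verifying the identity $N=\bigl(S_n^2+S_{n-1}^2-zS_nS_{n-1}\bigr)^2$. It is actually a polynomial identity in the independent variables $S_n,S_{n-1},z$ (no Chebyshev relation is used until the very end), and since both sides are homogeneous of degree $4$ in $(S_n,S_{n-1})$ it suffices to check it with $S_{n-1}=1$. Setting $P=S_n^2-zS_n$, one has $(S_n-1)(S_n-(z-1))=P+(z-1)$ and $S_n^2-zS_n-(z-3)=P+(3-z)$, so the product of the last three factors of $N$ is $\bigl(P+(z-1)\bigr)\bigl(P+(3-z)\bigr)=P^2+2P+(z-1)(3-z)$; adding $(z-2)^2$ and using $(z-1)(3-z)+(z-2)^2=1$ collapses $N$ to $P^2+2P+1=(P+1)^2=(S_n^2-zS_n+1)^2$, which is the right side. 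After this, the argument is purely the algebra above; one should only also recall (as already invoked) that \cite{MPL} guarantees $\Phi^{\mathrm{hol}}_{C(2n,-2n)}$ is the factor of $v-z$ carrying the holonomy component, and then Theorems~\ref{main-h} and~\ref{main-s} for $C(2n,-2n)$ follow verbatim as for $C(2n,3)$, using Proposition~\ref{trig'}.
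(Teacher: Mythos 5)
Your proposal is correct and follows the paper's proof essentially verbatim: write $\Phi^{\mathrm{hol}}_{C(2n,-2n)}=b-ax^2$ with $a=S_{n-1}^2$, $b=-1+(z+2)S_{n-1}^2$, apply Lemma~\ref{cd} with $c=f_n(z)$, and reduce $d$ to $\frac{\left(S_n^2+S_{n-1}^2-zS_nS_{n-1}\right)^2}{(z-2)^2S_{n-1}^4}=g_n(z)$ via Lemma~\ref{chev}. Your homogeneity/dehomogenization check of the key polynomial identity is just an explicit way of carrying out what the paper calls a direct calculation (and it correctly uses $(z-3)S_{n-1}^2$ where the paper has a typographical $S_{n-1}$), so there is nothing further to add.
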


By using the trigonometric identity (Proposition \ref{trig'}) and Proposition \ref{Riley'}, the proof of Theorems \ref{main-h} and \ref{main-s} for $C(2n,-2n)$  is similar to that for $C(2n,3)$.

\section{Examples} \label{ex}

We illustrate the volume formulas from Theorem~\ref{main-h} and
Theorem~\ref{main-s} with explicit computations for representative knots from
each family.

For a two-bridge knot $K=C(2n,3)$, $C(2n, 2)$ or $C(2n, -2n)$ with cone
angle $\alpha\in[0,\alpha_K)$, Theorem~\ref{main-h} expresses the hyperbolic
volume as
\[
  \mathrm{Vol}(K(\alpha))
  = i\!\int_{\bar{y}_0}^{y_0}
    \CI(y)\,dy,
\]
where $y_0$ is a root of the basic equation $f^2_n(y)+ A^2= (1+A^2)g_n(y)$ with $\mathrm{Im}(f_n(y_0))>0$,
$\bar{y}_0$ is its conjugate, and the integrand is
$$
  \CI(y)
  \;=\;
  \log\!\left(\frac{f_n(y)^2 + A^2}{(1+A^2)\,g_n(y)}\right)
  \frac{f_n'(y)}{f_n(y)^2-1},
  \qquad
  A = \cot\!\tfrac{\alpha}{2}.
$$
When $\alpha\in(\alpha_K,\pi]$ (spherical regime) and the basic equation
$f_n^2+A^2=(1+A^2)g_n$ has roots $y_\pm$ with $f_n(y_\pm)\in\BR$,
Theorem~\ref{main-s} gives the volume as the integral
$\int_{y_+}^{y_-}\CI(y)\,dy$.

In all cases, the integration path must not pass through singular points
of the integrand $\CI(y)$. When such a singularity
lies on the straight-line path from $\bar{y}_0$ to $y_0$ (in the hyperbolic case) or from $y_+$ to $y_-$ (in the spherical case), the contour
is deformed to avoid it, as noted in the statements of Theorem~\ref{main-h} and Theorem~\ref{main-s}.

\subsection{Example 1: Figure-eight knot $4_1 = C(2,2)$}

The rational slope is $5/2 = 2+1/2$, so $4_1 = \fb(5,2)$ in the Schubert notation. 

Since $f_1(y) = -\dfrac{y}{y-2}$ and $g_1(y) = -\dfrac{y-1}{(y-2)^2}$
(Proposition \ref{Riley}), the basic equation $f_1^2+A^2=(1+A^2)g_1$  simplifies to
\[
  y^2-(1+2\cos\alpha)\,(y-1)=0.
\]

From the computation
\[
  \frac{f_1'(y)}{f_1(y)^2-1} = \frac{1}{2(y-1)},
  \qquad
  \frac{f_1(y)^2 + A^2}{(1+A^2)\,g_1(y)}
  = -\frac{y^2 + A^2(y-2)^2}{(1+A^2)(y-1)},
\]
the integrand for $4_1$  is
$$
  \CI(y)
  = \log\!\left(-\frac{y^2+A^2(y-2)^2}{(1+A^2)(y-1)}\right)
    \frac{1}{2(y-1)}.
$$

\textbf{Euclidean angle:} $\alpha_K=2\pi/3$. 

\textbf{Case $\alpha=\pi$:} $A=\cot(\alpha/2) =0$. Setting $\cos\alpha = -1$, the basic equation becomes $y^2+y-1=0$, with roots
$y=\frac{-1\pm\sqrt{5}}{2}$. Then
\[
  \mathrm{Vol}(4_1(\pi)) = \int_{\frac{-1-\sqrt{5}}{2}}^{\frac{-1+\sqrt{5}}{2}} \log\!\left(-\frac{y^2}{y-1}\right)
    \frac{1}{2(y-1)} dy= 1.97392 \ldots = \frac{\pi^2}{5}.
\]

\textbf{Lobachevsky verification.}
Recall that $\Lambda(\theta)=-\int_0^\theta\ln|2\sin t|\,dt$.
We verify Theorem~\ref{main-h} against the Lobachevsky closed forms in \cite{MV}
for the following three hyperbolic cone angles.

\smallskip
\textbf{Case $\alpha=\pi/2$:} $A=\cot(\alpha/2) =1$. 
Setting $\cos\alpha=0$, the basic equation becomes $y^2-y+1=0$, with roots
$y = \tfrac{1 \pm i\sqrt{3}}{2}$.
Then 
\begin{eqnarray*}
  \mathrm{Vol}(4_1(\pi/2))
  &=& i\!\int_{\tfrac{1 - i\sqrt{3}}{2}}^{\tfrac{1 + i\sqrt{3}}{2}}\log\!\left(-\frac{y^2+(y-2)^2}{2(y-1)}\right)
    \frac{1}{2(y-1)}dy \\
  &=& 0.50747\ldots
  = \tfrac{3}{2}\,\Lambda(\pi/3)
  \quad\text{\cite{MV}}.
\end{eqnarray*} 

\textbf{Case $\alpha=\pi/3$:} $A=\cot(\alpha/2) =\sqrt{3}$.
Setting $\cos\alpha=\tfrac{1}{2}$, the basic equation becomes $y^2-2y+2=0$,
with roots $y = 1 \pm i$.  Then
\begin{eqnarray*}
  \mathrm{Vol}(4_1(\pi/3))
    &=&  i\!\int_{1 - i}^{1+i}\log\!\left(-\frac{y^2+3(y-2)^2}{4(y-1)}\right)
    \frac{1}{2(y-1)}dy \\
  &=& 1.22129\ldots
  = \tfrac{8}{3}\,\Lambda(\pi/4)
  \quad\text{\cite{MV}}.
\end{eqnarray*} 
Note that the straight-line path from $1-i$ to $1+i$ passes through the singularity $y=1$ of the integrand, so we can choose the contour consisting of  the  straight-line path from $1-i$ to $0$ and the straight-line path from $0$ to $1+i$.

\textbf{Case $\alpha=\pi/5$:} $A=\cot(\alpha/2) =\sqrt{5+ 2\sqrt{5}}$.
Setting $\cos\alpha=\cos(\pi/5) = \tfrac{1+\sqrt{5}}{4}$, the basic equation
becomes $y^2 - \tfrac{3+\sqrt{5}}{2}\,y + \tfrac{3+\sqrt{5}}{2} = 0$,
with roots
$y = (3+\sqrt{5} \pm \sqrt{10+2\sqrt{5}}\,i)/4$. Then
\begin{eqnarray*}
  \mathrm{Vol}(4_1(\pi/5))
  &=& i\!\int_{(3+\sqrt{5} + \sqrt{10+2\sqrt{5}}\,i)/4}^{(3+\sqrt{5} - \sqrt{10+2\sqrt{5}}\,i)/4} \log\!\left(-\frac{y^2+(5+ 2\sqrt{5})(y-2)^2}{(6+ 2\sqrt{5})(y-1)}\right)
    \frac{1}{2(y-1)} dy \\
 &=& 1.70857\ldots
  = 2\Lambda(3\pi/10)+2\Lambda(\pi/10)
  \qquad\text{\cite{MV}}.
\end{eqnarray*}

All above values are independently verified with SnapPy~\cite{Sn}.

\subsection{Example 2: Knot $5_2 = C(2,3)$}

The rational slope is $7/3 = 2+1/3$, so $5_2 = \fb(7,3)$.
Since $f_1(y)=-\dfrac{y}{y-2}$ and $g_1(y)=-\dfrac{(y-1)^2}{(y-2)^3}$ (Proposition \ref{Riley-odd}), 
the basic equation $f_1^2+A^2=(1+A^2)g_1$  simplifies to 
$$
  y^3 - (3+2\cos\alpha)\,y^2 + (4+6\cos\alpha)\,y - (3+4\cos\alpha) = 0.
$$

From the computations
\[
  \frac{f_1'(y)}{f_1(y)^2-1} = \frac{1}{2(y-1)},
  \qquad
  \frac{f_1(y)^2 + A^2}{(1+A^2)\,g_1(y)}
  = -(y-2)\frac{y^2 + A^2(y-2)^2}{(1+A^2)(y-1)^2},
\]
the integrand for $5_2$  is
$$
  \CI(y)
  = \log\!\left(-(y-2)\frac{y^2 + A^2(y-2)^2}{(1+A^2)(y-1)^2}\right)
    \frac{1}{2(y-1)}.
$$

\textbf{Case $\alpha=\pi$:} $A=\cot(\alpha/2) =0$. 
Setting $\cos\alpha=-1$, the basic equation becomes $y^3-y^2-2y+1=0$,
with roots  $\approx -1.24698, \, 0.44504, \, 1.80194$. With $y_+ \approx -1.24698$ and $y_- \approx 0.44504$ we obtain 
\[
  \mathrm{Vol}(5_2(\pi))= \int_{y_+}^{y_-} \log\!\left(-(y-2)\frac{y^2}{(y-1)^2}\right)
    \frac{1}{2(y-1)} dy= 1.40994 \ldots = \frac{\pi^2}{7}.
\]

\textbf{Case $\alpha=2\pi/3$:} $A=\cot(\alpha/2) =1/\sqrt{3}$. 
Setting $\cos\alpha = -\tfrac{1}{2}$, the basic equation becomes
$y^3-2y^2+y-1=0$, which has  a real root $\approx 1.75488$ and two complex roots $\approx 0.12256 \pm 0.74486 \, i$. With $y_0 \approx 0.12256 + 0.74486 \, i$ we obtain
\[
  \mathrm{Vol}(5_2(2\pi/3))
  = i\!\int_{\overline{y_0}}^{y_0}\ \log\!\left(-(y-2)\frac{3y^2 + (y-2)^2}{4(y-1)^2}\right)
    \frac{1}{2(y-1)} \,dy
  = 0.31424\ldots
\]
This agrees with \cite[Example~3]{Me}.

\textbf{Case $\alpha=\pi/2$:} $A=\cot(\alpha/2) =1$.
Setting $\cos\alpha = 0$, the basic equation becomes
$y^3-3y^2+4y-3=0$, which has a real root $\approx 1.68233$ and two complex conjugate roots $\approx 0.65884 \pm 1.16154 \, i$. With $y_0 \approx 0.65884 + 1.16154 \, i$ we obtain
\[
  \mathrm{Vol}(5_2(\pi/2))
  =  i\!\int_{\overline{y_0}}^{y_0}\ \log\!\left(-(y-2)\frac{y^2 + (y-2)^2}{2(y-1)^2}\right)
    \frac{1}{2(y-1)} \,dy
  = 1.18738\ldots
\]

\textbf{Case $\alpha=\pi/3$:} $A=\cot(\alpha/2) =\sqrt{3}$.
Setting $\cos\alpha = \tfrac{1}{2}$, the equation becomes
$y^3-4y^2+7y-5=0$,
which has a real root $\approx 1.56984$ and two complex conjugate roots $\approx 1.21508 \pm 1.30714 \, i$. With $y_0 \approx 1.21508 \pm 1.30714 \, i$ we obtain
\[
  \mathrm{Vol}(5_2(\pi/3))
  =  i\!\int_{\overline{y_0}}^{y_0}\ \log\!\left(-(y-2)\frac{y^2 + 3(y-2)^2}{4(y-1)^2}\right)
    \frac{1}{2(y-1)} \,dy
  = 2.04253\ldots
\]

All above values are independently verified with SnapPy~\cite{Sn}.

\subsection{Example 3: Stevedore's knot $6_1 = C(4,2)$}

The rational slope is $9/2 = 4+1/2$, so $6_1 = \fb(9,2)$.
Since $f_2(y) =- \dfrac{y^2-2}{(y-2)\,y}$ and 
$g_2(y) = -\dfrac{y^2-y-1}{(y-2)^2\,y^3}$ (Proposition \ref{Riley}), 
the basic equation $f_2^2+A^2=(1+A^2)g_2$  simplifies to 
$$
  y^5 - 2(1{+}\cos\alpha)\,y^4 + (4\cos\alpha) y^3
  + y^2 + (1{-}2\cos\alpha)\,y - 1 = 0.
$$

From the computations
\[
  \frac{f_2'(y)}{f_2(y)^2-1}
  = \frac{y^2-2y+2}{2(y-1)(y^2-y-1)},
  \qquad
  \frac{f_2(y)^2+A^2}{(1+A^2)\,g_2(y)}
  = -y \frac{(y^2{-}2)^2+A^2(y{-}2)^2y^2}{(1+A^2)(y^2{-}y{-}1)},
\]
the integrand for $6_1$  is
$$
  \CI(y)
  = \log\!\left(-y \frac{(y^2{-}2)^2+A^2(y{-}2)^2y^2}{(1+A^2)(y^2{-}y{-}1)}\right)
    \frac{y^2-2y+2}{2(y-1)(y^2-y-1)}.$$

\textbf{Case $\alpha=\pi$:} $A=\cot(\alpha/2) =0$. 
Setting $\cos\alpha=-1$, the basic equation becomes $y^5  -4y^3
  + y^2 + 3y - 1=0$,
with roots  $\approx -1.87939, \, -1, \, 0.34730, \, 1, \, 1.53209$. With $y_+ \approx -1.87939$ and $y_-=-1$ we obtain
\[
  \mathrm{Vol}(6_1(\pi))= \int_{y_+}^{y_-}\log\!\left(-y \frac{(y^2{-}2)^2}{y^2{-}y{-}1}\right)
    \frac{y^2-2y+2}{2(y-1)(y^2-y-1)} dy= 1.09662 \ldots = \frac{\pi^2}{9}.
\]

\textbf{Case $\alpha = 2\pi/3$:} $A=\cot(\alpha/2) =1/\sqrt{3}$. 
Setting $\cos\alpha = -\tfrac{1}{2}$, the basic equation becomes 
$y^5 - y^4 -2 y^3+ y^2 + 2y - 1 = 0$, which has three real roots $\approx 0.52489, 1, 1.49022$ and two complex roots $\approx -1.00755 \pm 0.51312 \,i$. With $y_0 \approx -1.00755 + 0.51312 \,i$ we obtain 
$$
  \mathrm{Vol}(6_1(2\pi/3))
  = i\!\int_{\overline{y_0}}^{y_0} \log\!\left(-y \frac{3(y^2{-}2)^2+(y{-}2)^2y^2}{4(y^2{-}y{-}1)}\right)
    \frac{y^2-2y+2}{2(y-1)(y^2-y-1)} dy \\
  = 0.65425\ldots
$$

\textbf{Case $\alpha = \pi/2$:} $A=\cot(\alpha/2) =1$.
Setting $\cos\alpha = 0$, the basic equation becomes $y^5 - 2 y^4 + y^2 + y - 1 =(y-1)^2(y^3-y-1) = 0$, which has three real roots $1 \text{~(multiplicity two)}, \, 1.32472$ and two complex roots $\approx -0.66236 \pm 0.56228 \, i$. With $y_0 \approx -0.66236 + 0.56228 \, i$ we obtain       
$$
  \mathrm{Vol}(6_1(\pi/2))
  = i\!\int_{\overline{y_0}}^{y_0}  \log\!\left(-y \frac{(y^2{-}2)^2+(y{-}2)^2y^2}{2(y^2{-}y{-}1)}\right)
    \frac{y^2-2y+2}{2(y-1)(y^2-y-1)} dy \\
  = 1.64974\ldots 
$$

\textbf{Case $\alpha = \pi/3$:} $A=\cot(\alpha/2) =\sqrt{3}$.
Setting $\cos\alpha = \tfrac{1}{2}$, the basic equation becomes
$y^5 - 3y^4 + 2 y^3 + y^2  - 1 = 0$, which has a real root $1$ and four complex roots $\approx -0.47356 \pm 0.44477 \, i, \, 1.47356 \pm 0.44477 \, i$. With $y_0 \approx -0.47356 + 0.44477 \, i$ we obtain
$$
  \mathrm{Vol}(6_1(\pi/3))
  = i\!\int_{\overline{y_0}}^{y_0}  \log\!\left(-y \frac{(y^2{-}2)^2+3(y{-}2)^2y^2}{4(y^2{-}y{-}1)}\right)
    \frac{y^2-2y+2}{2(y-1)(y^2-y-1)} dy \\
  = 2.47479\ldots 
$$
This agrees with \cite[Example~4]{Me}.

All above values are  independently verified with SnapPy~\cite{Sn}.

\subsection{Example 4: Knot $7_4 = C(4,-4)$}

The rational slope is $15/4 = 4 - 1/4$, so $7_4=\fb(15,4)$. Since 
$f_2(y) = -\dfrac{y^2-2}{(y-2)y}$ and $g_2(y) = \dfrac{1}{(y-2)^2y^4}$ (Proposition \ref{Riley'}), the basic equation $f_2^2+A^2=(1+A^2)g_2$ simplifies to 
$$
  1-y^3+2y^2\cos\alpha = 0.
$$

From the computations
\[
  \frac{f_2'(y)}{f_2(y)^2-1}
  = \frac{y^2-2y+2}{2(y-1)(y^2-y-1)},
  \qquad
  \frac{f_2(y)^2+A^2}{(1+A^2)\,g_2(y)}
  = y^2 \frac{(y^2{-}2)^2+A^2(y{-}2)^2y^2}{1+A^2},
\]
the integrand for $7_4$  is
$$
  \CI(y)
  = \log\!\left(y^2 \frac{(y^2{-}2)^2+A^2(y{-}2)^2y^2}{1+A^2}\right)
    \frac{y^2-2y+2}{2(y-1)(y^2-y-1)}.$$

\textbf{Case $\alpha=\pi$:} $A=\cot(\alpha/2) =0$.
Setting $\cos\alpha=-1$, the basic equation becomes $y^3
  + 2y^2  - 1=0$,
with roots  $\approx -1.61803, \, -1,  \, 0.61803$. With $y_+ \approx -1.61803$ and $y_-=-1$ we obtain
\[
  \mathrm{Vol}(7_4(\pi))= \int_{y_+}^{y_-}\log\!\left(y^2 (y^2{-}2)^2\right)
    \frac{y^2-2y+2}{2(y-1)(y^2-y-1)} dy= 0.65797 \ldots = \frac{\pi^2}{15}.
\]

\textbf{Case $\alpha = 2\pi/3$:} $A=\cot(\alpha/2) =1/\sqrt{3}$. 
Setting $\cos\alpha = -\tfrac{1}{2}$, the basic equation becomes 
$y^3+y^2-1= 0$, which has one real root $\approx 0.75488$ and two complex roots $\approx -0.87744 \pm 0.74486\,i$. With $y_0 \approx -0.87744 + 0.74486\,i$ we obtain 
$$
  \mathrm{Vol}(7_4(2\pi/3))
  = i\!\int_{\overline{y_0}}^{y_0} \log\!\left(y^2 \frac{3(y^2{-}2)^2+(y{-}2)^2y^2}{4}\right)
    \frac{y^2-2y+2}{2(y-1)(y^2-y-1)}dy \\
  = 1.57118\ldots
$$

\textbf{Case $\alpha = \pi/2$:} $A=\cot(\alpha/2) =1$.
Setting $\cos\alpha = 0$, the basic equation becomes $y^3-1= 0$, which has one real root $1$ and two complex roots $\frac{-1 \pm \sqrt{3} i}{2}$. With $y_0 = \frac{-1 + \sqrt{3} i}{2}$ we obtain 
$$
  \mathrm{Vol}(7_4(\pi/2))
  = i\!\int_{\overline{y_0}}^{y_0} \log\!\left(y^2 \frac{(y^2{-}2)^2+(y{-}2)^2y^2}{2}\right)
    \frac{y^2-2y+2}{2(y-1)(y^2-y-1)}dy \\
  = 3.04482\ldots
$$

\textbf{Case $\alpha = \pi/3$:} $A=\cot(\alpha/2) =\sqrt{3}$.
Setting $\cos\alpha = \tfrac{1}{2}$, the basic equation becomes
$y^3-y^2-1= 0$, which has a real root $\approx 1.46557$ and two complex roots $\approx -0.23279 \pm 0.79255 \, i$. With $y_0 \approx -0.23279 + 0.79255 \, i$ we obtain 
$$
  \mathrm{Vol}(7_4(\pi/3))
  = i\!\int_{\overline{y_0}}^{y_0} \log\!\left(y^2 \frac{(y^2{-}2)^2+3(y{-}2)^2y^2}{4}\right)
    \frac{y^2-2y+2}{2(y-1)(y^2-y-1)}dy \\
  = 4.22178\ldots
$$

All above values are  independently verified with SnapPy~\cite{Sn}.

\subsection{Summary of cone-manifold volumes}

The following table collects the values of
$\mathrm{Vol}(K(2\pi/m))$ computed above for $4_1$, $5_2$, $6_1$, $7_4$ and $m=2, 3, 4, 6$. All entries are computed via Theorems~\ref{main-h} and~\ref{main-s}, and 
are independently verified
with SnapPy~\cite{Sn}.

\medskip 

\begin{center}
\begin{tabular}{lclclclclcl}
\hline
Knot & $m=2$ & $m=3$ & $m=4$ & $m=6$ \\ 
\hline
$4_1$ & $\pi^2/5\approx 1.97392$ & 0 (Euclidean) & $0.50747\ldots$ & $1.22129\ldots$ \\ 
$5_2$ & $\pi^2/7\approx 1.40994$ & $0.31424\ldots$ & $1.18738\ldots$ & $2.04253\ldots$ \\ 
$6_1$ & $\pi^2/9\approx 1.09662$ & $0.65425\ldots$ & $1.64974\ldots$ & $2.47479\ldots$ \\ 
$7_4$ & $\pi^2/15\approx 0.65797$ & $1.57118\ldots$ & $3.04482\ldots$ & 4.22178\ldots \\ 
\hline
\end{tabular}
\end{center}

\medskip

As shown in Example~1, the Lobachevsky closed forms for $4_1$ at $m=4,6,10$
from Vesnin--Mednykh~\cite{MV} are reproduced approximately by our integral formula.
The values $\mathrm{Vol}(5_2(2\pi/3))=0.31424\ldots$ and
$\mathrm{Vol}(6_1(\pi/3))=2.47479\ldots$ agree with
\cite[Examples~3 and~4]{Me}, respectively.

\section*{Acknowledgements} The authors would like to thank the referees for helpful suggestions and comments. 
The first author has been supported by a grant from the Simons Foundation (\#708778). 

\section*{Declarations} 

\subsection*{Conflict of Interest} The authors declare no competing interests.

\subsection*{Data Availability} Data sharing is not applicable to this article as no datasets were generated or analyzed during the current study.

\end{document}